\documentclass[reqno,twoside,11pt,english]{amsart}
\usepackage{amsmath,amsfonts,amssymb,amsthm,epsfig}
\newcommand{\RNum}[1]{\uppercase\expandafter{\romannumeral #1\relax}}
\usepackage{comment}

\usepackage{color}
\usepackage[final,colorlinks,linkcolor=blue,anchorcolor=red,citecolor=blue]{hyperref}
\usepackage{graphicx}
\usepackage{titletoc,epsf}
\usepackage{amsmath,amsfonts,latexsym,amsthm,amsxtra,amssymb,bbm}
\allowdisplaybreaks
\usepackage{graphicx,float}
\usepackage{tikz}
\usetikzlibrary{intersections,patterns}
\usetikzlibrary{calc,spy}
\usepackage{ifthen}
\usepackage{float}

\allowdisplaybreaks

\newtheorem{thm}{Theorem}[section]
\newtheorem{lem}[thm]{Lemma}
\newtheorem{op}[thm]{Open Problem}
\newtheorem{cor}[thm]{Corollary}
\newtheorem{prop}[thm]{Proposition}
\newtheorem{assertion}{Assertion}[section]

\newtheorem{step}{Step}[section]

\newtheorem{cl}{Claim}[section]

\newtheorem{ca}{Case}%[section]
\newtheorem{sca}[section]{Subcase}
\newtheorem{scl}[section]{Subclaim}
\newtheorem{conj}[equation]{Conjecture}

\theoremstyle{definition}
\newtheorem{defn}[thm]{Definition}

\newtheorem{ques}[equation]{Question}
\newtheorem{rem}[thm]{Remark}
\newtheorem{exam}[thm]{Example}

\newcounter {own}
\def\theown {\thesection       .\arabic{own}}
\numberwithin{equation}{section}

\newenvironment{pf}[1][]{%
	\vskip 3mm
	\noindent
	\ifthenelse{\equal{#1}{}}%
	{{\slshape Proof. }}%
	{{\slshape #1.} }%
}%
{\qed\bigskip}

\newcommand{\IR}{{\mathbb R}}

\newcommand{\diam}{{\operatorname{diam}}}

\newcommand{\Card}{{\operatorname{Card}}}

\newcommand{\bas}{\begin{assertion}}
	\newcommand{\eas}{\end{assertion}}
\newcommand{\ben}{\begin{enumerate}}
	\newcommand{\een}{\end{enumerate}}
\newcommand{\bst}{\begin{step}}
	\newcommand{\est}{\end{step}}

\def\be{\begin{equation}}
	\def\ee{\end{equation}}

\newcommand{\bee}{\begin{enumerate}}
	\newcommand{\eee}{\end{enumerate}}

\newcommand{\blem}{\begin{lem}}
	\newcommand{\elem}{\end{lem}}
\newcommand{\bthm}{\begin{thm}}
	\newcommand{\ethm}{\end{thm}}
\newcommand{\bcor}{\begin{cor}}
	\newcommand{\ecor}{\end{cor}}
\newcommand{\beg}{\begin{exam}}
	\newcommand{\eeg}{\end{exam}}
\newcommand{\begs}{\begin{examples}}
	\newcommand{\eegs}{\end{examples}}
\newcommand{\bdefe}{\begin{defn}}
	\newcommand{\edefe}{\end{defn}}
\newcommand{\bprob}{\begin{prob}}
	\newcommand{\eprob}{\end{prob}}
\newcommand{\bques}{\begin{ques}}
	\newcommand{\eques}{\end{ques}}
\newcommand{\bei}{\begin{itemize}}
	\newcommand{\eei}{\end{itemize}}
\newcommand{\bcon}{\begin{conj}}
	\newcommand{\econ}{\end{conj}}
\newcommand{\bop}{\begin{op}}
	\newcommand{\eop}{\end{op}}

\newcommand{\bstep}{\begin{step}}
	\newcommand{\estep}{\end{step}}

\newcommand{\bca}{\begin{ca}}
	\newcommand{\eca}{\end{ca}}
\newcommand{\bsca}{\begin{sca}}
	\newcommand{\esca}{\end{sca}}

\newcommand{\bcl}{\begin{cl}}
	\newcommand{\ecl}{\end{cl}}

\newcommand{\bscl}{\begin{scl}}
	\newcommand{\escl}{\end{scl}}

\newcommand{\bcons}{\begin{conjs}}
	\newcommand{\econs}{\end{conjs}}
\newcommand{\bprop}{\begin{prop}}
	\newcommand{\eprop}{\end{prop}}
\newcommand{\br}{\begin{rem}}
	\newcommand{\er}{\end{rem}}
\newcommand{\brs}{\begin{rems}}
	\newcommand{\ers}{\end{rems}}
\newcommand{\bo}{\begin{obser}}
	\newcommand{\eo}{\end{obser}}
\newcommand{\bos}{\begin{obsers}}
	\newcommand{\eos}{\end{obsers}}
\newcommand{\bpf}{\begin{pf}}
	\newcommand{\epf}{\end{pf}}
\newcommand{\ba}{\begin{array}}
	\newcommand{\ea}{\end{array}}
\newcommand{\beq}{\begin{eqnarray}}
	\newcommand{\beqq}{\begin{eqnarray*}}
		\newcommand{\eeq}{\end{eqnarray}}
	\newcommand{\eeqq}{\end{eqnarray*}}

\newcommand{\ds}{\displaystyle}

\newcounter{minutes}
\divide\time by 60
\newcounter{hours}
\multiply\time by 60 \addtocounter{minutes}{-\time}
\begin{document}

	\title[Characterizations of $\varphi$-inner uniform domains and $\varphi$-uniform domains]{Characterizations of $\varphi$-inner uniform domains and $\varphi$-uniform domains in metric spaces}
	
	\author[S.-J. Gao,  M. He ]{Shu-Jing Gao and Miao He}
	
	\address[S.-J. Gao]{School of Mathematics, Shandong University, Jinan and Research Center for Mathematics and Interdisciplinary Sciences, Shandong University, Qingdao, P. R. China}
	\email{gsjing9@163.com}

	\address[M. He]{Research Center for Mathematics and Interdisciplinary Sciences, Shandong University, Qingdao, P. R. China}
	\email{hm293428@163.com}

	\date{}
	%\dedicatory{Dedicated to Professor~Pekka Koskela on the occasion of his 65th birthday}
	%\thanks{$^*$Corresponding author: Manzi Huang}
	\subjclass[2020]{Primary: 51F30, 30C65; Secondary: 30F45, 51F99}
	\keywords{$\varphi$-Uniform domain, $\varphi$-Inner uniform domain, $\varphi$-locally connected, Ball separation condition, Quasihyperbolic geodesic.}
	%${}^{\mathbf{*}}$ Corresponding author}

\begin{abstract}
In this paper, we study $\varphi$-uniform/-inner uniform domains in  metric spaces.  In our first main result, we establish quantitative characterizations of three different definitions of $\varphi$-uniformity/-inner uniformity in length spaces. In our second main result, we obtain a geometric sufficient condition of $\varphi$-uniform/-inner uniform domains in doubling length spaces, which extends the corresponding result of Guo-Huang-Wang [arXiv 2025] and Gao-Guo-Huang-Wang [arXiv 2025] for linear case. 
%The proof relies on a deep theorem about geometric characterization of Gromov hyperbolicity, established recently in the seminal work of Guo-Huang-Wang \cite{GHW}.   	
\end{abstract}

%\date{\today}

\thanks{Both authors are supported by the Young Scientist Program of the Ministry of Science and Technology of China (No.~2021YFA1002200), the NSF of China (12311530037) and the Taishan Scholar Project. }
%M. Huang and X. Wang are partly supported by NSF of China (No.12371071, No.12271189 and No.12571081). }

\maketitle

%%%%%%%%%%%%%%%%%%%%%%%%%%%%%%%%%%%%%%%%%%%
\section{Introduction}\label{sec-1}
%%%%%%%%%%%%%%%%%%%%%%%%%%%%%%%%%%%%%%%%%%%

The class of uniform domains was first introduced by Martio and Sarvas \cite{MS} in their study of injectivity theorems of analytic functions and suitable higher dimensional extension. The idea for its definition was motivated by the earlier celebrated work of F. John on elasticity \cite{Jo}. Due to its nice analytic/geometric properties and intimate relation with quasiconformal maps, it plays a central role in modern geometric function theory and  higher dimensional domain analysis; see for instance  \cite{GO,GePa,GGGKN-2024,Guo-2015,Hajlasz-Koskela-2000,HeKo,Herron-Koskela-1991,HLPW,Jo81,Martin-1985,Ma,NV-1991,Vai-1988,Vai7}.

The recent studies on generalized quasidisks (see e.g.~\cite{GKT,Guo-2015-PAMS}) motivates people to consider nonlinear extensions of the class of uniform/inner uniform domains.  Let $\varphi$ be a continuous, increasing function with $\varphi(0) = 0$ and $ \varphi(t) \geq t$ for all $t > 0$.  Following \cite{Guo-Koskela-2014} and \cite{Guo-2015}, we recall the definition of $\varphi$-uniform/-inner uniform domains below. 
\begin{defn}[$\varphi$-length uniform domain]\label{def:varphi uniform domain}
	A domain $\Omega$ in a metric space $X=(X,d)$ is called  {\it $\varphi$-length uniform}, if there is a constant $c\geq 1$ and each pair of points $z_{1},z_{2}$ in $\Omega$ can be joined by a rectifiable curve $\gamma$ in $\Omega$ satisfying
	\begin{enumerate}
		\item\label{con1} $\ds\min_{j=1,2}\{\ell_d (\gamma [z_j, z])\}\leq \varphi(c\, d_\Omega(z))$ for all $z\in \gamma$, and
		\item\label{con2} $\ell_d(\gamma)\leq \varphi(c\,d(z_{1},z_{2}))$,
	\end{enumerate}
	where $\ell_d(\gamma)$ denotes the arc-length of $\gamma$ with respect to the metric $d$,
	$\gamma[z_{j},z]$ the subcurve of $\gamma$ between $z_{j}$ and $z$, and $d_\Omega(z):=d(z,\partial \Omega)$ denotes the distance from $z$ to the boundary of $\Omega$. In a $\varphi$-length uniform domain $\Omega$, any curve $\gamma\subset \Omega$, which satisfies conditions (1) and (2) above, is called a {\it $\varphi$-length uniform curve}.
\end{defn}

When $\ell_d$ in \eqref{con1} is replaced by distance or diameter and (\ref{con2}) is replaced by diameter,  then the corresponding domain is called {\it $\varphi$-dist uniform domain} or {\it $\varphi$-diam uniform domain} and the corresponding curve is called a {\it $\varphi$-dist uniform curve} or {\it $\varphi$-diam uniform curve}, respectively. 

If the condition \eqref{con2} in Definition \ref{def:varphi uniform domain} is replaced by the weaker inequality
\begin{equation*}\label{eq:def for inner quasiconvexity}
	\ell_d(\gamma)\leq \varphi\left(c\,\sigma_\Omega( z_{1}, z_{2})\right),
\end{equation*}
where $\sigma_\Omega$ is the {\it inner distance} in $\Omega$ defined by
$$\sigma_\Omega(z_1,z_2)=\inf \{\ell(\alpha):\; \alpha\subset \Omega\;
\mbox{is a rectifiable curve joining}\; z_1\; \mbox{and}\; z_2 \},$$
then $\Omega$ is said to be $\varphi$-{\it length inner uniform} and the corresponding curve $\gamma$ is called a \emph{$\varphi$-length inner uniform curve}. When the context is clear, we often drop the subscript $\Omega$ from $\sigma_{\Omega}$ and simply write $\sigma$. One can similarly define the class of $\varphi$-dist inner uniform domains and $\varphi$-diam inner uniform domains.
In the linear case $\varphi(t)=t$, $\varphi$-uniform/-inner uniform domains are simply uniform/inner uniform domains.

\begin{rem}\label{rmk:on tech assumptions on varphi}

To avoid complicated formulations for our results, we assume throughout this paper that $ \varphi^{-1} $ is doubling and that $ t \mapsto t/\varphi^{-1}(t) $ is decreasing when $0<t<1$ and increasing when $t\geq1$. Here doubling requires that $ \varphi^{-1}(2t) \leq C\varphi^{-1}(t) $ for some constant $ C $ and all $ t > 0 $. The second condition is in practise harmless, but the doubling condition rules out the case of exponential exterior cusps. 
\end{rem}

It is immediate from the definitions that 
$$\varphi\text{-length uniform domains} \Rightarrow \varphi\text{-diam uniform domains}\\\Rightarrow \varphi\text{-dist uniform domains}.$$
For the reverse direction, when $\varphi=t$ is linear, we know that a $c$-diam uniform domain $\Omega \subset \mathbb{R}^n$ is $c_1$-length uniform with $c_1 = c_1(c, n)$ (see \cite{Ma}) and a $c$-dist uniform domain $\Omega$ in a normed space is $c_2$-length uniform with $c_2 = c_2(c) \leq 64 c^3$ (see \cite{Vai2004}). 

Our first main result below shows that when $\varphi$ is nonlinear, the above quantitative characterizations of different $\varphi$-uniform domains remain valid. When $\varphi(t)=t$ is linear, a similar result was obtained in the recent work \cite{LSW-2026} via a different argument. 

\begin{thm}\label{thm:the equivalence of the three types of varphi uniform domain}
Let $X$ be a length space and $\Omega$ be a proper subdomain in $X$. Then the following statements hold. 
\begin{enumerate}
	\item\label{result 1} If $\Omega$ is $\varphi$-dist uniform, then it is $\psi$-diam uniform, where $$\psi(t)=2\varphi\Bigl(16c\,\varphi\bigl(2c\,\varphi(c\,t)\bigr)\Bigr).$$ 
	\item\label{result 2} If $X$ is a $Q$-doubling metric space and $\Omega$ is $\varphi$-diam John, then it is $\eta$-length John. %where $\eta=$. 
	Moreover, if $\Omega$ is $\varphi$-diam uniform, then it is $\eta$-length uniform. Here $$\eta(t)=\max\{2\eta_1(2t), 2\eta_2(t)\},$$
	with
	 $$\eta_1(t) = \int_0^{\varphi(Ct)} Q^{\log_2 160C\frac{s}{\varphi^{-1}(s)}} ds,\quad  \eta_2(t)=\left(Q^{\log_2 80\,\varphi\left(2Ct\right)/t}+C'\right)t,$$
	 where $C$, $C'$
	are constants depending only on the given data with $C\geq C'\geq1$, and $\eta_1$ is assumed to converge.
\end{enumerate}
\end{thm}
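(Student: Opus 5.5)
For part (1), the plan is to build a new curve rather than prove that the given $\varphi$-dist uniform curve is already diam uniform. Fix $z_1,z_2\in\Omega$ and let $\gamma$ be a $\varphi$-dist uniform curve joining them, with constant $c$. Condition (2) is about diameter and is already part of the dist definition, so $\diam\gamma\le\varphi(c\,d(z_1,z_2))$. The problem is condition (1): a point $z\in\gamma$ may be close in distance to $z_1$ while the arc $\gamma[z_1,z]$ wanders far away. To handle this I would use a \emph{splitting at the midpoint} argument. Let $w\in\gamma$ be a point where $d(z,z_1)$ and $d(z,z_2)$ are comparable, i.e. where the two distance functions balance. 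Then $d_\Omega(w)\ge \varphi^{-1}(d(z_1,z_2)/2)/c$, so $w$ is far from the boundary relative to $d(z_1,z_2)$.

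Next I would treat the arc from $z_1$ to $w$ by a chaining argument, to control the diameter of the initial subarcs in terms of $d_\Omega(z)$. Take $z\in\gamma[z_1,w]$ with $d_\Omega(z)=r$. Points $y$ of $\gamma[z_1,z]$ near $z_1$ in distance satisfy $d(y,z_1)\le\varphi(c\,d_\Omega(y))$. I then need to show that $\gamma[z_1,z]$ cannot go farther than $\varphi(2c\varphi(c r))$-type quantities away. I would argue by contradiction: suppose some $y\in\gamma[z_1,z]$ has $d(y,z_1)$ much larger than $\varphi(c r)\ge d(z,z_1)$ (using the dist condition at $z$). At such a $y$, the dist condition forces $d_\Omega(y)\ge\varphi^{-1}(d(y,z_1))/c$. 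To bound the offending distance I would re-apply the dist uniform property to the pair $(z_1,z)$, which yields an auxiliary curve with diameter at most $\varphi(c\,\varphi(cr))$. The constant $16$ and the outer factor $2$ then come from a triangle-inequality replacement of subarcs.

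In practice, then, I would \emph{modify} $\gamma$. Replace $\gamma[z_1,w]$ by a concatenation of dist uniform curves joining successive points $z_1=x_0,x_1,\dots$ where $d_\Omega(x_k)$ grows geometrically. Each piece has diameter at most $\varphi(c\,d(x_k,x_{k+1}))\le\varphi(2c\,\varphi(c\,d_\Omega(x_{k+1})))$. Using the length-space structure, and $d_\Omega$ being $1$-Lipschitz, I would then check the diam condition along the new curve. The composition $\varphi(16c\,\varphi(2c\varphi(ct)))$ in the statement suggests exactly this three-layer nesting. The main obstacle is ensuring the concatenated curve still satisfies condition (1) at all intermediate points, not only at the chain points; that requires the pieces near $z_1$ to have small diameter.

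For part (2), I would follow the linear-case strategy of Martio and Väisälä's Euclidean argument, adapted to doubling spaces. Given a $\varphi$-diam John (resp. uniform) curve $\gamma$, I would replace it by a quasi-straightened curve. Take a Whitney-type decomposition along $\gamma$: cover $\gamma$ by balls $B(x,d_\Omega(x)/8)$. At dyadic scales $2^{-k}$ of $d_\Omega$, the diam condition forces the portion of $\gamma$ with $d_\Omega\approx 2^{-k}$ to lie in a ball of radius about $\varphi(C2^{-k})$. By $Q$-doubling, that ball contains at most $Q^{\log_2(160C\varphi(C2^{-k})/2^{-k})}$ disjoint balls of radius $\sim 2^{-k}$. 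Connecting consecutive balls by near-geodesics (length space) gives a curve whose length in that scale band is at most $2^{-k}$ times this count. Summing over scales with $s=\varphi(C2^{-k})$ yields the integral $\eta_1$, which explains the assumed convergence. The term $\eta_2$ handles the top scale, where $d_\Omega\gtrsim d(z_1,z_2)$ and the count is $Q^{\log_2 80\varphi(2Ct)/t}$. The hard step is making sure that the new curve, built by choosing a chain of balls in order, still passes monotonically through scales. This is needed so that $\ell(\gamma[z_1,z])$ involves only scales $\lesssim d_\Omega(z)$; I would get it by taking a minimal (shortest) chain within each band.
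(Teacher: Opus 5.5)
\textbf{Part (1).} The gap is exactly at the step you call the main obstacle, and the estimates you want to build on are not available. For $z\in\gamma[z_1,w]$ the dist condition gives only $\min\{d(z,z_1),d(z,z_2)\}\le\varphi(c\,d_\Omega(z))$. Unlike $\diam\gamma[z_1,z]$, the function $d(z,z_1)$ is not monotone along $\gamma$, so the minimum may be $d(z,z_2)$. Indeed, when $d(z_1,z_2)\ll t$, the first half of $\gamma$ may pass very near $z_2$, where $d_\Omega$ can be tiny, before heading out to $w$. Consequences:
\begin{itemize}
\item your bound $d(z,z_1)\le\varphi(cr)$ fails;
\item the chain inequality $d(x_k,x_{k+1})\le 2\varphi(c\,d_\Omega(x_{k+1}))$ is unjustified;
\item you never say how the $x_k$ are chosen, or how $d_\Omega$ is bounded below at interior points of a connecting piece.
\end{itemize}
You also cannot inherit condition (2) from $\gamma$ once you build a new curve.

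The paper's mechanism, with $a,b$ for your $z_1,z_2$ and $z_0$ for your $w$, runs as follows. Dispose first of the case $\max\{d_\Omega(a),d_\Omega(b)\}>d(a,b)$ with a short arc. Otherwise take $z_0$ with $d(a,z_0)=d(b,z_0)=t$, so $t\le\varphi(c\,d_\Omega(z_0))$, and set $r=d_\Omega(a)/4$. At each step apply the dist-uniform property to the pair $(x_j,z_0)$, not to consecutive chain points or to $(x_j,b)$. Let $x_{j+1}$ be the \emph{first} point of $\alpha_j$ on $S(a,2^jr)$.
\begin{itemize}
\item First exit keeps the prefix inside $\overline{B}(a,2^jr)$, which controls its diameter.
\item Since the far endpoint is always $z_0$, with $d(x,z_0)\ge t-2^jr$, every $x\in\alpha_j[x_j,x_{j+1}]$ either lies within $\varphi^{-1}(2^{j-2}r)/(2c)$ of $x_j$, or lies at least that far from both endpoints of $\alpha_j$. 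In the first case use $d_\Omega(x_j)\ge\varphi^{-1}(2^{j-2}r)/c$, which comes from the dist condition on $\alpha_{j-1}$.
\end{itemize}
Either way $2^{j-2}r\le\varphi(2c\,\varphi(c\,d_\Omega(x)))$. Combined with $\diam\alpha_{m-1}\le\varphi(2ct)$ for the final piece, this gives the three-fold composition in $\psi$ and also condition (2).

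\textbf{Part (2).} Here your organizing principle is off. For nonlinear $\varphi$, points of $\gamma[x_1,z]$ (your $\gamma[z_1,z]$) can have $d_\Omega$ as large as $d_\Omega(z)+\varphi(c\,d_\Omega(z))\gg d_\Omega(z)$. So a chain along $\gamma$ genuinely meets scales up to $\varphi(c\,d_\Omega(z))$ before $z$. A ``minimal chain within each band'' cannot remove this, since the $d_\Omega$-bands are not contiguous along $\gamma$, and it is precisely why $\eta_1$ integrates up to $\varphi(Ct)$. What makes the count work is a parameter that is monotone along the curve:
\begin{itemize}
\item The paper splits $\gamma$ at $x_0$ with $\diam\gamma[x_1,x_0]=\diam\gamma[x_2,x_0]$. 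On $\gamma[x_1,x_0]$ the John minimum is then always attained at $x_1$, and $D(z)=\diam\gamma[x_1,z]$ is nondecreasing.
\item Step sizes are tied to $D$ rather than to $d_\Omega$: $r_i=\varphi^{-1}(D(z_i))/(10c)\le d_\Omega(z_i)/10$. Here $z_{i+1}$ is the last point at distance $r_i$ from $z_i$ when $\gamma[x_1,z_i]$ is traversed towards $x_1$, which makes the balls $B(z_i,r_i/2)$ pairwise disjoint.
\item Doubling counts the chain points with $D\in(2^{-j-1}L,2^{-j}L]$, all of which lie in $B(x_1,2^{-j+1}L)$.
\end{itemize}

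Your reading of $\eta_2$ is also wrong. It is not a ``top scale, $d_\Omega\gtrsim d(z_1,z_2)$'' term, and it is already needed for the John statement, where $d(z_1,z_2)$ plays no role. It handles the scales $D>\varphi(10c\,\varphi^{-1}(1))$, which the small-scale estimate can only absorb into a constant when $L$ is bounded. There $s/\varphi^{-1}(s)$ is increasing, so the doubling count is governed by the largest radius $r'$. The paper shows the radii halve every $\kappa=Q^{[\log_2 8\varphi(10c\,r')/r']}+1$ steps, and a geometric series gives a bound of order $Q^{\log_2\varphi(Ct)/t}\,t$. The hypothesis $\diam\gamma\le\varphi(c\,d(x_1,x_2))$ enters only in the uniform part, to bound $L$ and $r'_0$.
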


Recall that a metric space $X=(X,d)$ is called \emph{$Q$-doubling}, if for each ball $B(x,r)$, every $r/2$-separated subset of $B(x,r)$ has at most $Q$ points.
A simple volume comparison implies that $\IR^n$ (equipped with the standard Euclidean distance) is $2^n$-doubling. 

Following Guo and Koskela \cite{Guo-Koskela-2014}, 
%\begin{defn}[$\varphi$-John domain]\label{def:nonlinear John}
a domain $\Omega$ in a metric space $X=(X, d)$ is called {\it $\varphi$-length John}, if there is a constant $c\geq1$ such that for each $x,y \in \Omega$, there exists a rectifiable curve $\gamma\subset \Omega$ joined $x$ and $y$, satisfying Definition \ref{def:varphi uniform domain} (\ref{con1}). Any such curve $\gamma$ is called a {\it  $\varphi$-length John curve}. 
When the length on the left-hand side of  Definition \ref{def:varphi uniform domain} (\ref{con1}) is replaced by distance or diameter, then the corresponding domain is called a {\it $\varphi$-dist John domain} or a {\it $\varphi$-diam John domain}, and the corresponding curve is called a {\it $\varphi$-dist John curve} or a {\it $\varphi$-diam John curve}, respectively. 

The following corollary is an immediate consequence.
\begin{cor}\label{cor:quasihyperbolic geodesic}
	Let $X$ be a $Q$-doubling length space and $\Omega$ be a proper subdomain in $X$ such that $(\Omega,k)$ is geodesic. If $\Omega$ is a $\varphi$-diam John domain, then every quasihyperbolic geodesic is an $\eta$-length John curve. Moreover, if $\Omega$ is a $\varphi$-diam uniform, then every quasihyperbolic geodesic is an $\eta$-length uniform curve. Here $\eta$ is as in Theorem~\ref{thm:the equivalence of the three types of varphi uniform domain}.
\end{cor}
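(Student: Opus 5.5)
The corollary should follow by running the length-estimate part of Theorem~\ref{thm:the equivalence of the three types of varphi uniform domain}(\ref{result 2}) along a quasihyperbolic geodesic, rather than along the curve the theorem constructs. The plan has three steps: observe that the theorem's proof really proves a statement about every $\varphi$-diam John (uniform) curve, show that a quasihyperbolic geodesic is such a curve, and combine.

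\emph{Step 1: what the proof of the theorem actually gives.} I will record that the proof of Theorem~\ref{thm:the equivalence of the three types of varphi uniform domain}(\ref{result 2}) establishes the following. Take any curve $\gamma$ joining $z_1,z_2$ that satisfies the diam-John condition $\min_j \diam(\gamma[z_j,z])\le \varphi(c\,d_\Omega(z))$, and whose quasihyperbolic length is controlled in a Whitney-type sense: the number of dyadic pieces $\{z\in\gamma: d_\Omega(z)\sim 2^{-k}\}$ met by $\gamma$ near a given scale is bounded. For such $\gamma$, the $Q$-doubling property bounds the length of each piece by a power $Q^{\log_2(\cdot)}$. Summing over scales produces the integral $\eta_1$ and the term $\eta_2$.

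If the written proof instead straightens an arbitrary diam-John curve, I will check that the straightening step can be replaced by a quasihyperbolic geodesic between the same endpoints. This works because geodesics minimize $k$-length.

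\emph{Step 2: quasihyperbolic geodesics are $\varphi'$-diam John (uniform) curves.} This is the main obstacle. Since $(\Omega,k)$ is geodesic, fix a quasihyperbolic geodesic $\alpha$ joining $z_1,z_2$, and let $\gamma$ be a $\varphi$-diam John curve between the same points.
\begin{enumerate}
\item Bound $k(z_1,z_2)\le k$-length of $\gamma$ in terms of $\log$ of ratios $\varphi$-related to $d_\Omega(z_j)$ and $d(z_1,z_2)$. This uses the standard integration of $1/d_\Omega$ along a $\varphi$-cone.
\item For $z\in\alpha$, compare the quasihyperbolic distances $k(z_j,z)$ with $\log\bigl(1+\diam(\alpha[z_j,z])/d_\Omega(z)\bigr)$-type lower bounds.
\item Deduce that $\min_j\diam(\alpha[z_j,z])\le \varphi_1(C\,d_\Omega(z))$, where $\varphi_1$ is a composition of $\varphi$ with itself, of the same form as $\psi$ in part (\ref{result 1}).
\end{enumerate}
I would first try the Gehring--Osgood style argument: assume the diameter bound fails at a point $z$, and build a shortcut through the John curve to get a contradiction with geodesicity. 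The doubling of $\varphi^{-1}$ from Remark~\ref{rmk:on tech assumptions on varphi} is what keeps the iterated compositions of the same form. In the uniform case, the diameter bound $\diam\alpha\le\varphi_2(C\,d(z_1,z_2))$ follows the same way, with the endpoints as comparison points.

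\emph{Step 3: conclusion.} Apply Step 1 to $\alpha$. Since $\eta$ is said to be as in the theorem, the constants $C,C'$ absorb the modified $\varphi$ via "depending only on the data". Hence $\ell(\alpha[z_j,z])\le\eta(c\,d_\Omega(z))$ and $\ell(\alpha)\le\eta(c\,d(z_1,z_2))$.

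If Step 2 proves too lossy, the fallback is to note that the theorem's proof already produces its length-John curve as a quasihyperbolic geodesic. The corollary is called immediate, which suggests this is the case. Then the statement holds verbatim for every geodesic, since the argument never used any property of the chosen curve beyond its geodesicity and the diam-John condition on the domain.
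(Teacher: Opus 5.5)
\textbf{The gap is Step 2, and no argument can close it.} A quasihyperbolic geodesic in a $\varphi$-diam John domain need not be a $\varphi'$-diam John curve, already for $\varphi(t)=t$ in $\mathbb{R}^3$. Cut the ball $B(0,10)$ by the plane $\{x_3=0\}$, leaving open two holes in the wall: one of radius $1$ centred at $(5,0,0)$ and one of radius $w=\varepsilon^{3/2}$ centred at $0$. These domains are John with one absolute constant.

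Take $x_\pm=(0,0,\pm\varepsilon)$. The segment $[x_+,x_-]$ has $k$-length $2\sinh^{-1}(\varepsilon/w)=\log(1/\varepsilon)+O(1)$. By \eqref{(2.2)}, any curve through the large hole has $k$-length at least $2\log\bigl(1+4/(\sqrt2\,\varepsilon)\bigr)>2\log(1/\varepsilon)$. So for small $\varepsilon$ every geodesic from $x_+$ to $x_-$ crosses the wall at a point $z$ of the small hole. There $d_\Omega(z)\le w$, while both subarcs have diameter at least $\varepsilon=\varepsilon^{-1/2}w$. For linear $\varphi$ the $\eta$ of Theorem~\ref{thm:the equivalence of the three types of varphi uniform domain} is linear, so no bound depending only on the data survives. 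This is exactly a failure of ball separation, which is why the results quoted in the introduction assume Gromov hyperbolicity.

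Three further problems:
\begin{itemize}
\item Even if Step 2 held, a composed $\varphi_1$ changes the functional form of $\eta$ (for instance $t^{1/2}$ becomes $t^{1/4}$). Doubling of $\varphi^{-1}$ only absorbs modifications of the type in Lemma~\ref{lem-3-1}, not compositions, so $C,C'$ cannot absorb it.
\item Your fallback is wrong. The curve $\gamma_0$ of Theorem~\ref{thm:the equivalence of the three types of varphi uniform domain}\eqref{result 2} is a concatenation of near-length-minimizing arcs $\alpha_i,\beta_j$ supplied by the length-space property, not a quasihyperbolic geodesic. Gluing geodesic segments between points of a John curve never reproduces the prescribed geodesic.
\item Step 1 misreads that proof. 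It never bounds the length of the diam-John curve, which may be huge. It keeps only points $z_i$ with $d(z_i,z_{i+1})=r_i\le\frac{1}{10}d_\Omega(z_i)$, counts them via the disjoint balls $B(z_i,r_i/2)$ and Lemma~\ref{qs-5}, and reconnects them by arcs of length at most $2r_i$.
\end{itemize}

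\textbf{What the paper does.} It runs that construction with the points $z_i,w_j$ chosen on the geodesic $\gamma$ itself. The tool you are missing is Lemma~\ref{Lemma-2.1}, not mere $k$-minimality. Since $d(z_i,z_{i+1})\le\frac{1}{10}d_\Omega(z_i)$, the subarc of $\gamma$ between them has length at most $\frac{100}{81}e^{10/81}d(z_i,z_{i+1})\le 2r_i$. These subarcs can therefore replace the $\alpha_i$ and $\beta_j$, the constructed curve is $\gamma$ itself, and \eqref{eq:1.7}, \eqref{eq:1.11}, \eqref{1.14} bound its length.

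However, $r_i\le\frac{1}{10}d_\Omega(z_i)$ is \eqref{eq:1.1} for $\gamma$. So the argument needs the geodesic itself to be a $\varphi$-diam John curve, and for the uniform part to satisfy \eqref{eq:1.12} as well. The paper takes this as an input rather than deriving it from the hypothesis on $\Omega$. Where it is used, in the proof of Theorem~\ref{thm:main result varphi inner uniform}, it is supplied by ball separation and $\varphi$-LC-2 through \eqref{H25-0729-1}.

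So this route proves only the conditional statement: a quasihyperbolic geodesic that is a $\varphi$-diam John (uniform) curve is an $\eta$-length John (uniform) curve. Your Step 2 should become that hypothesis, not a lemma to be proved.
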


Next, we turn to the geometric sufficient condition of $\varphi$-uniform/-inner uniform domains in doubling metric spaces. For this, we first recall the definition of quasihyperbolic metric, which was initially introduced by Gehring and Palka \cite{GePa} for domains in $\IR^n$, and then has been extensively studied in \cite{GO}.
The {\it quasihyperbolic length} of a rectifiable arc
$\gamma$ 
in a proper domain $\Omega\subsetneq (X,d)$ is defined as
$$
\ell_{k}(\gamma):=\int_{\gamma}\frac{|dz|}{d_\Omega(z)}.
$$
For any $z_1$, $z_2$ in $\Omega$, the {\it quasihyperbolic distance}
$k_\Omega(z_1,z_2)$ between $z_1$ and $z_2$ is set to be
$$k_\Omega(z_1,z_2)=\inf_{\gamma}\{\ell_k(\gamma)\},
$$
where the infimum is taken over all rectifiable arcs $\gamma$
joining $z_1$ and $z_2$ in $\Omega$. An arc $\gamma$ from $z_1$ to $z_2$ is called a {\it quasihyperbolic geodesic} if
$\ell_k(\gamma)=k_\Omega(z_1,z_2)$. Clearly, each subarc of a quasihyperbolic
geodesic is a quasihyperbolic geodesic. 

Akin to the linear case \cite{GO,GePa,Guo-2015,Guo-Koskela-2014,NV-1991,ZLR}, quasihyperbolic geodesic also plays a central role in the study of geometric aspect of $\varphi$-uniform/-inner uniform domains. 
In \cite{Guo-Koskela-2014}, Guo and Koskela proved that each quasihyperbolic geodesic in a bounded simply connected planar $\varphi$-John domain is quantitatively $\varphi$-inner uniform, which in particular implies that bounded simply connected planar $\varphi$-John domains are indeed $\varphi$-inner uniform. Shortly after that, Guo \cite{Guo-2015} extended this result to bounded Gromov hyperbolic $\varphi$-John domains in $\mathbb{R}^n$. Here we will not give the precise definition of Gromov hyperbolic domains, but it is useful to notice that quasiconformal images of inner uniform domains are typical examples of Gromov hyperbolic domains; see \cite{BHK,BB4,HRWZ-2025,ZLR} for more information. 

In a recent major breakthrough, Guo-Huang-Wang \cite{GHW} proved that in doubling length spaces, Gromov hyperbolicity is completely characterization by the following geometric ball separation condition: A domain $\Omega$ in a metric space $(X,d)$ is said to satisfy the \emph{$C$-ball separation condition}, $C>0$, if for each quasihyperbolic geodesic $\gamma_{xy}\subset \Omega$, every $z\in \gamma_{xy}$ and every curve $\gamma\subset \Omega$ joining $x$ and $y$, it holds
\begin{equation}\label{eq:def for ball separation}
	B_{\sigma}\left(z,Cd_{\Omega}(z)\right)\cap \gamma\neq \emptyset.
\end{equation}
To emphasize the distance $\sigma$, we also say that $(\Omega,\sigma)$ satisfies the $C$-ball separation condition. As an immediate consequence, we know that each quasihyperbolic geodesic in a bounded $\varphi$-John domain $\Omega\subset \mathbb{R}^n$, which satisfies the ball separation condition, is $\varphi$-inner uniform, quantitatively (see \cite{GGGKN-2024}). In the linear case $\varphi(t)=t$, a similar kind of geometric characterization of uniform/inner uniform domains in doubling length spaces was obtained in \cite{GGHW}, under a  geometric local linear connectivity condition even weaker than John. 

Our second goal of this paper is to further extend these results to the class of $\varphi$-uniform/-inner uniform domains in doubling length spaces. For this, we first recall the following nonlinear local connectivity condition, originally introduced in \cite{GKT}. A domain  $\Omega\subset X$ is called \emph{$\varphi$-locally connected} (written for short as $\varphi$-LC), if the following two conditions are satisfied:
\begin{description}
	\item[$\varphi$-LC-1] each pair of points $a,b\in \Omega\cap {B(x,r)}$ can be joined in $\Omega\cap {B(x,\varphi(cr))}$;
	\item[$\varphi$-LC-2] each pair of points $a,b\in \Omega\backslash B(x,r)$ can be joined in $\Omega\backslash B(x,\varphi^{-1}(r/c))$.
\end{description}
It is called $\varphi$-LC-$i$ ($i\in \{1,2\}$) if only the $i$-th condition is satisfied. Above, the constant $c$ is required to be independent of $x$ and $r$. 

With the above notion at hand, our second main result reads as follows. 
\begin{thm}\label{thm:main result varphi inner uniform}
	Let $(X,d)$ be a $Q$-doubling length space and $\Omega$ be a proper subdomain in $X$ such that $(\Omega,k)$ is geodesic. If $\Omega$ is $\varphi$-LC-2 and satisfies the $c_0$-ball separation condition, then every 
	quasihyperbolic geodesic $\gamma$ in $\Omega$ is an $\eta$-inner uniform curve. In particular, $\Omega$ is an $\eta$-inner uniform domain. Here $\eta$ is as in Theorem \ref{thm:the equivalence of the three types of varphi uniform domain}.
\end{thm}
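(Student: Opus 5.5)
Throughout, $\gamma=\gamma_{xy}$ is a quasihyperbolic geodesic joining $x,y\in\Omega$, and the estimates are carried out in the inner length space $(\Omega,\sigma)$. The plan has two stages. First show that $\gamma$ is a $\varphi'$-diam inner uniform curve, i.e.\ uniform with respect to $\sigma$ with the diameter versions of conditions (1) and (2), for some $\varphi'$ comparable to $\varphi$. Then upgrade from diameter to length by the doubling argument of Theorem~\ref{thm:the equivalence of the three types of varphi uniform domain}(\ref{result 2}). This upgrade is essentially the mechanism behind Corollary~\ref{cor:quasihyperbolic geodesic}: doubling controls the number of Whitney-type pieces of a quasihyperbolic geodesic at each scale, which converts diameter bounds into length bounds with the stated $\eta_1,\eta_2$. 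We apply it in $(\Omega,\sigma)$, which is still doubling (locally comparable to $d$) and has the same boundary distance $d_\Omega$.

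\textbf{Step 1: the diameter John condition.} Fix $z\in\gamma$. By $\varphi$-LC-2 applied with center $z$ and radius $r$, any two points of $\Omega\setminus B(z,r)$ can be joined by a curve $\beta$ avoiding $B(z,\varphi^{-1}(r/c))$. Suppose both $\diam_\sigma\gamma[x,z]$ and $\diam_\sigma\gamma[z,y]$ are large compared with $d_\Omega(z)$. Choose points $x'\in\gamma[x,z]$ and $y'\in\gamma[z,y]$ far from $z$. Join $x'$ to $y'$ by an LC-2 curve $\beta$, and form the competitor curve $\gamma[x,x']\cup\beta\cup\gamma[y',y]$ from $x$ to $y$.

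The ball separation condition forces this competitor to meet $B_\sigma(z,c_0d_\Omega(z))$. The curve $\beta$ stays outside $B(z,\varphi^{-1}(r/c))$, so if $r$ is chosen with $\varphi^{-1}(r/c)>C\,c_0d_\Omega(z)$, then $\beta$ misses that ball. Therefore the pieces $\gamma[x,x']$ or $\gamma[y',y]$ must come $\sigma$-close to $z$.

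This is not yet a contradiction, so the choice must be iterated. Pick $x',y'$ as the last and first points of $\gamma$ outside $B(z,r)$ on each side. Any return of the outer pieces to $B(z,c_0d_\Omega(z))$ would create a subarc of the geodesic leaving and re-entering a ball of radius comparable to $d_\Omega(z)$. Quasihyperbolic geodesic estimates (the standard $k\ge\log(1+\ell/d_\Omega)$ bounds) rule this out once $r$ is large.

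The conclusion is $\min_j\diam_\sigma\gamma[z_j,z]\lesssim r\simeq c\,\varphi(Cc_0d_\Omega(z))$. This is exactly where $\varphi$, rather than a linear function, enters, through $\varphi^{-1}$ in LC-2.

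\textbf{Step 2: the inner quasiconvexity condition.} We need $\diam_\sigma\gamma\le\varphi'(C\sigma(x,y))$. Let $\alpha$ be a near-$\sigma$-geodesic from $x$ to $y$. Ball separation gives, for every $z\in\gamma$, a point of $\alpha$ within $c_0d_\Omega(z)$ of $z$. Take $z_0$ maximizing $d_\Omega$ on $\gamma$.

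If $d_\Omega(z_0)\lesssim\sigma(x,y)$, then every point of $\gamma$ lies within $c_0d_\Omega(z_0)+\ell(\alpha)\lesssim\sigma(x,y)$ of $x$, and we are done.

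Otherwise $d_\Omega(z_0)\gg\sigma(x,y)$. Then $x$ and $y$ lie in a ball around the $\alpha$-point near $z_0$ that is well inside $\Omega$, and this ball has size comparable to $d_\Omega$. A quasihyperbolic geodesic between points of such a ball stays in a comparable ball, by the usual comparison $k(x,y)\simeq\sigma(x,y)/d_\Omega$. Combining with Step 1 then bounds $\diam_\sigma\gamma$ by $\varphi(C\sigma(x,y))$.

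\textbf{Step 3 and the main obstacle.} With the diam-inner version in hand, invoking Theorem~\ref{thm:the equivalence of the three types of varphi uniform domain}(\ref{result 2}) in $(\Omega,\sigma)$ yields the $\eta$-inner uniform property, with $\eta$ as stated. The hard step is Step 1. Ball separation only says that competitor curves pass near $z$, while LC-2 gives avoidance only at the much smaller radius $\varphi^{-1}(r/c)$. Closing the argument requires choosing $x',y'$ so that the remaining geodesic subarcs provably do not re-enter $B_\sigma(z,c_0d_\Omega(z))$.

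I would first try to handle this via the quasihyperbolic length comparison along $\gamma$. If that fails, I would fall back on the Gromov-hyperbolicity characterization of \cite{GHW}, which provides thin triangles. Thin triangles give exactly the "no return" property of geodesics, with constants depending on $c_0$ and $Q$.
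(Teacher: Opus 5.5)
Your Step~2 has a genuine gap, and it sits exactly where the paper uses its one deep external ingredient. In the branch $d_\Omega(z_0)\gg\sigma(x,y)$, ball separation at $z_0$ only produces $a\in\alpha$ with $\sigma(a,z_0)<c_0d_\Omega(z_0)$. Since $c_0$ may exceed $1$, nothing puts $a$ deep inside $\Omega$; in fact every point of $\alpha$ has $d_\Omega\le d_\Omega(x)+\ell(\alpha)$. If $\max\{d_\Omega(x),d_\Omega(y)\}\ge 2\sigma(x,y)$, Lemma~\ref{Lemma-2.1} (with $a=2$) already gives $\ell(\gamma)\lesssim d(x,y)$. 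So the case that actually has to be excluded is $d_\Omega(x),d_\Omega(y)<2\sigma(x,y)$ while $\gamma$ climbs to height $\gg\sigma(x,y)$. Applying ball separation to $\gamma$ and $\alpha$ point by point yields only $\sigma(z,x)\le c_0d_\Omega(z)+\ell(\alpha)$ along $\gamma$, which is perfectly compatible with such a climb. Your assertion that $x,y$ then lie in a ball well inside $\Omega$ is precisely what would need proof. Ruling this out is the Gehring--Hayman inequality, and the paper does not attempt an elementary derivation. It invokes \cite[Theorem 1.9]{GHW}: ball separation in a $Q$-doubling length space gives $\ell(\gamma)\le c_1\sigma(x,y)$ with $c_1=c_1(Q,c_0)$. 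That settles condition (2) outright, linearly and already for length, so no diameter-to-length upgrade is needed for it.

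Conversely, the obstacle you single out in Step~1 is self-inflicted. Ball separation holds for every quasihyperbolic geodesic and every curve with the same endpoints, and subarcs of quasihyperbolic geodesics are again quasihyperbolic geodesics. So instead of the competitor $\gamma[x,x']\cup\beta\cup\gamma[y',y]$, apply it to the subarc $\gamma[x',y']\ni z$ and to $\beta$, which joins exactly $x'$ and $y'$. Then $\beta$ itself meets $B_\sigma(z,c_0d_\Omega(z))\subset B(z,c_0d_\Omega(z))$ while avoiding $B(z,\varphi^{-1}(r/c))$, whence $r<c\,\varphi(c_0d_\Omega(z))$. Choosing $r=\frac12\min_j\diam\gamma[z_j,z]$ gives $\min_j\diam\gamma[z_j,z]\le 2c\,\varphi(c_0d_\Omega(z))$ at once, with no no-return property and no thin triangles. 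This is the paper's argument, organized there around a splitting point $x_0$.

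These must be $d$-diameters: LC-2 is stated for $d$-balls, and a large $\sigma$-diameter of $\gamma[x,z]$ does not yield a point $d$-far from $z$. For the same reason, do not transfer Step~3 to $(\Omega,\sigma)$. Local comparability with $d$ does not make $(\Omega,\sigma)$ doubling (comb-like domains fail this), while the counting in Theorem~\ref{thm:the equivalence of the three types of varphi uniform domain}(\ref{result 2}) uses doubling at non-Whitney scales. The paper instead feeds the $d$-diam John condition for $\gamma$ into Corollary~\ref{cor:quasihyperbolic geodesic} in the doubling space $(X,d)$, noting that lengths with respect to $d$ and $\sigma$ coincide.
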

A similar geometric sufficient condition of $\varphi$-uniformity holds as well.    
\begin{thm}\label{thm:main result varphi uniform}
	Let $(X,d)$ be a $Q$-doubling length space and $\Omega$ be a proper subdomain in $X$ such that $(\Omega,k)$ is geodesic. If $\Omega$ is $\varphi$-LC and satisfies the $c_0$-ball separation condition, then every 
	quasihyperbolic geodesic $\gamma$ in $\Omega$ is a $\phi$-uniform curve. In particular, $\Omega$ is a $\phi$-uniform domain. 
	Here 
	$$\phi(t)=\max\left\{\eta(t), \eta_3(t)\right\},$$	
	where $\eta$ is as in Theorem \ref{thm:the equivalence of the three types of varphi uniform domain} and
 $$\eta_3(t)=e^{C_3Q^{\log_2\varphi(C_4t)/\eta^{-1}(t)}}t.$$	
with $C_3$, $C_4$ constants depending only on the given data and $\eta_3$ increasing, $\eta_3(0)=0$.
\end{thm}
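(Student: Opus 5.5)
Since $\varphi$-LC implies $\varphi$-LC-2, Theorem~\ref{thm:main result varphi inner uniform} applies, and every quasihyperbolic geodesic $\gamma=\gamma_{xy}$ is an $\eta$-inner uniform curve. In particular condition (1) of Definition~\ref{def:varphi uniform domain} already holds with $\eta\le\phi$, and
\[
\ell_d(\gamma)\le \eta\bigl(c\,\sigma(x,y)\bigr).
\]
The whole task is therefore to upgrade the inner distance to the metric $d$. It suffices to show
\[
\ell_d(\gamma)\le \eta_3\bigl(C\,d(x,y)\bigr)
\]
for a constant $C$ depending only on the data; taking the maximum with $\eta$ then gives the claimed $\phi$.

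\textbf{Step 1: the endpoints are not too close to the boundary.} Set $r=d(x,y)$. By $\varphi$-LC-1, $x$ and $y$ can be joined by a curve $\beta\subset \Omega\cap B(x,\varphi(cr))$. The plan is to use the ball separation condition together with $\varphi$-LC-2 to show that every $z\in\gamma$ satisfies
\[
d_\Omega(z)\lesssim \varphi(C r),
\]
and also to bound the diameter of $\gamma$. By ball separation, $B_\sigma(z,c_0d_\Omega(z))$ meets $\beta$. Hence $z$ lies within $\sigma$-distance $c_0 d_\Omega(z)$ of the ball $B(x,\varphi(cr))$, so $\sigma$-balls around points of $\gamma$ anchor it near $x$. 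Two cases follow.

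\emph{Case of small $d_\Omega(z)$.} If $d_\Omega(z)\le \varphi(Cr)$ for all $z\in\gamma$, then $\gamma$ stays in a $d$-ball of radius comparable to $\varphi(Cr)$. This gives the diameter bound.

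\emph{Case of a point with large $d_\Omega(z)$.} If some $z$ has $d_\Omega(z)\gg\varphi(Cr)$, then $x$ and $y$ both lie deep inside, near each other relative to $d_\Omega$. I would then invoke the standard quasihyperbolic estimate
\[
k(x,y)\le C\log\Bigl(1+\frac{r}{\min\{d_\Omega(x),d_\Omega(y)\}}\Bigr),
\]
valid when $r$ is small compared to $d_\Omega(x)$ in a length space, and conclude that $\gamma$ stays in a Whitney-type ball.

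\textbf{Step 2: quasihyperbolic length bound via doubling.} The form of $\eta_3$, namely $e^{C_3 Q^{\log_2(\cdot)}}t$, suggests exactly this argument. Cover $B(x,\varphi(C_4 r))$ by balls whose radius is comparable to $\eta^{-1}(\ell)$, where $\ell$ is the scale at which condition (1) forces $d_\Omega\ge \eta^{-1}(\cdot)$ along the middle of $\gamma$. By $Q$-doubling, the number of such balls is at most
\[
N=Q^{\log_2\varphi(C_4 r)/\eta^{-1}(\ell)}.
\]
The curve $\beta$ (or a chain of these balls) then yields
\[
k(x,y)\le C\,N,
\]
since each ball meeting $\Omega$ at a point with large $d_\Omega$ contributes bounded quasihyperbolic length.

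\textbf{Step 3: convert back to $d$-length.} Since $\gamma$ is a quasihyperbolic geodesic, integrating $|dz|/d_\Omega(z)$ along $\gamma$ gives
\[
\ell_d(\gamma)\le \max_\gamma d_\Omega\cdot k(x,y)\quad\text{or}\quad \ell_d(\gamma)\lesssim d_\Omega(x)\,e^{k(x,y)}.
\]
Near an endpoint, $d_\Omega$ grows at most exponentially in $k$, so together with the relation between $d_\Omega(x)$ and $r$ (the case split in Step 1) this produces
\[
\ell_d(\gamma)\le e^{C_3N}r=\eta_3(Cr).
\]

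\textbf{Main obstacle.} The delicate step is the covering count in Step 2. We must show that the chain from $x$ to $y$ avoiding the boundary (supplied by $\varphi$-LC-1 and the inner uniformity) passes only through points with $d_\Omega\gtrsim\eta^{-1}(r)$, except near the endpoints, where the endpoint regions have to be handled separately by condition (1). I would first try to apply condition (1) to the $\eta$-inner uniform curve itself. It gives $d_\Omega(z)\ge \eta^{-1}(\min_j\ell(\gamma[z_j,z]))/c$, and the remaining step is to show that the portion of $\gamma$ outside $B(x,\varphi(C_4r))$ is controlled by LC-2 and ball separation.
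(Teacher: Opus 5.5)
Your reduction to proving $\ell(\gamma)\le\eta_3(C\,d(x,y))$ is right. However, the proof has a genuine gap exactly at the step you call the main obstacle, and the arguments you offer for it do not work.

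In Step~1, ball separation applied to the $\varphi$-LC-1 curve $\beta$ only yields $d(z,x)<c_0\,d_\Omega(z)+\varphi(cr)$ for $z\in\gamma$. So a far point $z$ need only satisfy $d_\Omega(z)\gtrsim (d(x,z)-\varphi(cr))/c_0$. This is entirely consistent with $\gamma$ making a large excursion, so you get neither $d_\Omega\lesssim\varphi(Cr)$ along $\gamma$ nor a diameter bound. The inference ``some $d_\Omega(z)$ is large, hence $x,y$ lie deep inside'' is false. In fact the case that matters is $\max\{d_\Omega(x),d_\Omega(y)\}\le 2r$, to which one reduces via Lemma~\ref{Lemma-2.1}.

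In Step~2, the bound $k(x,y)\le CN$ cannot hold as stated, since $k(x,y)\ge\log(d_\Omega(y)/d_\Omega(x))$ is unbounded for fixed $r$. Nor can $\beta$ serve as the chain. $\varphi$-LC-1 gives no lower bound on $d_\Omega$ along $\beta$, and no upper bound on $\ell(\beta)$; if it did, Gehring--Hayman would give $\ell(\gamma)\le c_1\ell(\beta)$ at once.

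What your counting needs is a connected set joining the two ends that is simultaneously deep, $d_\Omega\gtrsim\eta^{-1}(r)$, and contained in $B(x,\varphi(C_4r))$. The middle part of $\gamma$ is deep but not known to be localized; $\beta$ is localized but not deep. If you knew the middle part of $\gamma$ lies in $B(x,\varphi(C_4r))$, your idea would close. Covering it by $N$ small balls and using the geodesic property gives quasihyperbolic length $\lesssim N$ there, hence $\ell(\gamma)\lesssim re^{CN}$. But that localization is essentially the content of the theorem, and you leave it open.

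The paper gets around this with an ingredient missing from your plan, Lemma~\ref{Lemma-2.5}. By Gromov hyperbolicity (a consequence of ball separation in doubling spaces), geodesics from $x$ to two points at mutual quasihyperbolic distance $\le 1$, each at distance $>1$ from $x$, have comparable lengths. The argument then runs by contradiction:
\begin{enumerate}
\item It chooses successive points $x_i$ on the LC-1 curve $\alpha\subset B(x,\varphi(c'd(x,y)))$ such that geodesics $\gamma_i$ from $x$ to $x_i$ have lengths between $e^{cC_0(i-1)}d(x,y)$ and $e^{cC_0 i}d(x,y)$. Gehring--Hayman guarantees that $[\kappa]$ such points occur before $y$ is reached.
\item It lets $x_{0,i}\in\gamma_i$ be the point at arc length $d(x,y)$ from $x_i$. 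These points are localized, since they lie within $d(x,y)$ of $\alpha$. They are also deep, by the $\eta$-John condition on $\gamma_i$.
\item Doubling then produces $x_{0,s},x_{0,t}$ with $s+1<t$ and $k_\Omega(x_{0,s},x_{0,t})<1$.
\item Lemma~\ref{Lemma-2.5} now contradicts the exponential gap between $\ell(\gamma_s)$ and $\ell(\gamma_t)$.
\end{enumerate}
The geodesic $\gamma_{xy}$ itself is never localized. Without such a hyperbolicity-based comparison, or some other genuine localization argument, your proof does not close.
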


Theorems \ref{thm:main result varphi inner uniform} and \ref{thm:main result varphi uniform} extend the corresponding results of \cite{GHW} and \cite{GGHW} to nonlinear uniform/inner uniform setting. For the proofs of both theorems, we shall use a deep result established in \cite[Theorem 1.9]{GHW}, which says that under the assumptions of the above theorems, $\Omega$ satisfies the so-called \emph{Gehring-Hayman inequality}. Recall that	a domain $\Omega$ in a metric space $(X,d)$ is said to satisfy the $C$-Gehring-Hayman inequality, $C > 1$, if for all $x, y \in \Omega$ and each quasihyperbolic geodesic $\gamma_{xy} \subset \Omega$ with end points $x$ and $y$, it holds
$$\ell(\gamma_{xy}) \le C\sigma_{\Omega}(x,y).$$
To emphasize the metric $d$, we also say that $(\Omega, d)$ satisfies the $C$-Gehring-Hayman inequality.

%Notice that while inner uniform domains in $Q$-doubling length spaces are Gromov hyperbolic, this is no longer true for nonlinear $\varphi$-inner uniform domains. For example, 
%\[
%\Omega = \bigl( \mathbb{R}^2 
%\setminus \{(n,0): n \in \mathbb{N}\} \bigr) 
%\setminus \bigl\{ (x,y) \mid 0 < x < 1,\ |y| < x^2 \bigr\}
%\]
%is $\varphi$-inner uniform but not Gromov hyperbolic. 

\medskip
%\textbf{Structure.} The structure of this paper is as follows. Section \ref{sec:elmentary} contains some elementary results that are necessary for later proofs. In Section \ref{sec:proofs}, we prove our main theorems. 
%%Finally, in Appendix, we proved that $\varphi$-dist (diam, length) John domain is $\varphi'$-LC-2 with $\varphi'(t)=2\varphi(2ct)$ and  $\varphi$-dist (diam, length) uniform domain is $\varphi'$-LC with $\varphi'(t)=2\varphi(2ct)$. 
%\smallskip

\textbf{Notations.} Throughout this paper, $X$ is always assumed to be a metric space. For a given metric $d$ on $X$,  $x\in X$ and $r>0$, we define 
$$B_d(x,r):=\{y\in X: d(x,y)<r\},$$ $$S_d(x,r):=\{y\in X: d(x,y)=r\}.$$
For each proper subdomain $\Omega\subset X$, we use $\Lambda_{xy}(\Omega)$ to represent the set of all quasihyperbolic geodesics in $\Omega$ with end points $x$ and $y$, and use $\gamma_{xy}$ to denote some quasihyperbolic geodesic in $\Lambda_{xy}(\Omega)$.
Meanwhile, we use  $\Gamma_{xy}(\Omega)$ to represent the set of all curves in $\Omega$ with end points $x$ and $y$.

\section{Some elementary estimates}\label{sec:elmentary}

For any $z_1$, $z_2$ in $\Omega$, let $\gamma\subset \Omega$ be an arc with end points $z_1$ and $z_2$. Then we have the following elementary estimates (see for instance \cite[Section 2]{Vai3})
\beq\label{(2.1)}
\ell_{k}(\gamma)\geq
\log\Big(1+\frac{\ell(\gamma)}{\min\{d_\Omega(z_1), d_\Omega(z_2)\}}\Big)
\eeq
and
\beq\label{(2.2)}
\begin{aligned}
	k_{\Omega}(z_1, z_2) &\geq  \log\Big(1+\frac{\sigma_\Omega(z_1,z_2)}{\min\{d_\Omega(z_1), d_\Omega(z_2)\}}\Big)
	\\
	&\geq
	\log\Big(1+\frac{|z_1-z_2|}{\min\{d_\Omega(z_1), d_\Omega(z_2)\}}\Big)
	\geq
	\Big|\log \frac{d_\Omega(z_2)}{d_\Omega(z_1)}\Big|.
\end{aligned}
\eeq

The technical assumptions on $\varphi$ imply the following result.

\begin{lem}[{\cite[Lemma 3.5]{Guo-Koskela-2014}}]\label{lem-3-1}
	Let $C_1 \geq 1$, $C_2 \geq 1$, and $C_3 \geq 1$ be given. There exists a constant $C$, depending only on $C_0, C_1, C_2$ and $C_3$, such that
	$$C_1 \varphi(C_2 t) + C_3 t \leq \varphi(C t)$$
	for all $t > 0$. Above, $C_0$ is the doubling constant of $\varphi^{-1}$.	
\end{lem}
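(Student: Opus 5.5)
The idea is to turn the doubling property of $\varphi^{-1}$ into a ``reverse doubling'' property of $\varphi$: multiplying $\varphi$ by $2$ costs at most a factor $C_0$ in the argument. Then both terms on the left-hand side can be absorbed into a single value of $\varphi$. Throughout, $\varphi$ is continuous and strictly increasing (as implicit in speaking of $\varphi^{-1}$), so it is monotone together with its inverse. We also use the standing assumption $\varphi(t)\ge t$.

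\emph{Step 1: from doubling of $\varphi^{-1}$ to an estimate for $\varphi$.} Fix $u>0$ and put $s=\varphi(u)$. The doubling inequality $\varphi^{-1}(2s)\le C_0\varphi^{-1}(s)$ reads $\varphi^{-1}(2\varphi(u))\le C_0u$. Applying the increasing map $\varphi$ gives
\[
2\varphi(u)\le \varphi(C_0u)\qquad\text{for all } u>0 .
\]
Iterating this $k$ times yields $2^k\varphi(u)\le\varphi(C_0^k u)$ for every integer $k\ge0$.

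\emph{Step 2: absorb each term.} Choose integers $k,m\ge0$ with $2^k\ge C_1$ and $2^m\ge C_3$; they depend only on $C_1$ and $C_3$. Then
\[
C_1\varphi(C_2t)\le 2^k\varphi(C_2t)\le\varphi(C_0^kC_2t).
\]
For the linear term we use $t\le\varphi(t)$, which gives
\[
C_3t\le 2^m\varphi(t)\le\varphi(C_0^mt).
\]

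\emph{Step 3: combine.} Let $M=\max\{C_0^kC_2,\,C_0^m\}$. By monotonicity of $\varphi$ and one more application of Step 1,
\[
C_1\varphi(C_2t)+C_3t\le 2\varphi(Mt)\le\varphi(C_0Mt).
\]
So $C=C_0M$ works, and it depends only on $C_0,C_1,C_2,C_3$.

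There is no real obstacle. The only point needing care is Step 1, the direction of the inequalities when passing between $\varphi$ and $\varphi^{-1}$, which is where strict monotonicity is used. The second technical assumption of Remark~\ref{rmk:on tech assumptions on varphi} (monotonicity of $t/\varphi^{-1}(t)$) is not needed here.
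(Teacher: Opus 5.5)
Your proof is correct. The key inequality $2\varphi(u)\le\varphi(C_0u)$ follows exactly as you derive it from $\varphi^{-1}(2\varphi(u))\le C_0u$. This uses that $\varphi$ maps $[0,\infty)$ onto $[0,\infty)$ (by continuity, $\varphi(0)=0$ and $\varphi(t)\ge t$), so that $\varphi(\varphi^{-1}(2s))=2s$. The iteration and absorption steps are then sound, with $C$ depending only on $C_0,C_1,C_2,C_3$.

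The paper gives no proof of this lemma; it only cites Guo--Koskela. Your argument is the standard one. A marginally shorter variant bounds $C_3t\le C_3\varphi(t)\le C_3\varphi(C_2t)$, so the left side is at most $(C_1+C_3)\varphi(C_2t)\le\varphi(C_0^{n}C_2t)$ whenever $2^n\ge C_1+C_3$. This changes only the constant.
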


The following estimate is useful in our later proofs.
\begin{lem}[{\cite[Lemma 2.2]{GHW}}]\label{Lemma-2.1}
	Let $(X, d)$ be a length space and $\Omega$ a proper domain of $X$. Suppose that there are a constant $a>1$ and a quasihyperbolic geodesic $\gamma$ joining $x_1, x_2$ in $\Omega$ such that $d(x_1,x_2)\leq a^{-1}d_{\Omega}(x_1)$.  Then $$k_{\Omega}(x_1,x_2)\leq \frac{10a}{9(a-1)}\frac{d(x_1,x_2)}{d_{\Omega}(x_1)}\leq \frac{10}{9}(a-1)^{-1}$$
	and $$\ell(\gamma)\leq \frac{10a}{9(a-1)}e^{\frac{10}{9}(a-1)^{-1}}d(x_1,x_2).$$
\end{lem}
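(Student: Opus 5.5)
The last statement is Lemma \ref{Lemma-2.1}, quoted from \cite{GHW}. I would prove it by comparing the quasihyperbolic geodesic with the near-optimal competitor curve coming from the length structure. Put $r=d(x_1,x_2)\le a^{-1}d_\Omega(x_1)$ and fix a small $\epsilon>0$. Since $X$ is a length space, there is a curve $\alpha$ from $x_1$ to $x_2$ with $\ell(\alpha)\le(1+\epsilon)r$. Every point $z\in\alpha$ satisfies $d(x_1,z)\le(1+\epsilon)r$. Because $d_\Omega$ is $1$-Lipschitz,
\[
d_\Omega(z)\ge d_\Omega(x_1)-(1+\epsilon)r\ge \Bigl(1-\tfrac{1+\epsilon}{a}\Bigr)d_\Omega(x_1).
\]
In particular $\alpha\subset\Omega$ once $\epsilon<a-1$. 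Integrating gives
\[
k_\Omega(x_1,x_2)\le \ell_k(\alpha)\le \frac{(1+\epsilon)r}{(1-(1+\epsilon)/a)\,d_\Omega(x_1)}.
\]
Letting $\epsilon\to0$ yields $k_\Omega(x_1,x_2)\le \frac{a}{a-1}\frac{r}{d_\Omega(x_1)}$, which is stronger than the stated constant $\frac{10a}{9(a-1)}$. The slack $10/9$ simply absorbs the choice of $\epsilon$ if one prefers not to pass to the limit. The second inequality of the first display is immediate from $r/d_\Omega(x_1)\le a^{-1}$, since $\frac{10a}{9(a-1)}\cdot\frac1a=\frac{10}{9}(a-1)^{-1}$.

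For the length bound, the key step is to control $d_\Omega$ from below along the geodesic $\gamma$. For every $z\in\gamma$ we have $k_\Omega(x_1,z)\le \ell_k(\gamma)=k_\Omega(x_1,x_2)=:K$, using that subarcs of geodesics are geodesics. By \eqref{(2.2)}, $|\log(d_\Omega(z)/d_\Omega(x_1))|\le K$, so $d_\Omega(z)\ge e^{-K}d_\Omega(x_1)$ on all of $\gamma$. Therefore
\[
\ell(\gamma)=\int_\gamma |dz|\le e^{K}d_\Omega(x_1)\,\ell_k(\gamma)=e^{K}d_\Omega(x_1)\,K.
\]
Substituting $K\le \frac{10a}{9(a-1)}\frac{r}{d_\Omega(x_1)}$ in the linear factor and $K\le\frac{10}{9}(a-1)^{-1}$ in the exponent gives
\[
\ell(\gamma)\le \frac{10a}{9(a-1)}e^{\frac{10}{9}(a-1)^{-1}}\,d(x_1,x_2),
\]
which is exactly the claim.

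There is no real obstacle. The only points needing care are two. First, the competitor curve must stay inside $\Omega$ and away from the boundary, which is guaranteed by $a>1$ and the choice $\epsilon<a-1$. Second, the lower bound for $d_\Omega$ along $\gamma$ must come from the quasihyperbolic estimate \eqref{(2.2)} rather than from the metric $d$: a priori $\gamma$ might wander far in $d$ before we know its length is small, so the distance-based argument used for $\alpha$ does not apply to it.
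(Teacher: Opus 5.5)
The paper gives no proof of this lemma; it imports it from \cite{GHW}. Your two-step strategy is the natural one: compare with a near-shortest curve to bound $k_\Omega$, then use $\bigl|\log\bigl(d_\Omega(z)/d_\Omega(x_1)\bigr)\bigr|\le k_\Omega(x_1,z)\le K$ along $\gamma$. It reproduces exactly the shape $K\,e^{K}d_\Omega(x_1)$ of the stated constant. The first part is fine, and your constant $\frac{a}{a-1}$ is even sharper. One small point: $\alpha\subset\Omega$ does not follow from $d_\Omega(z)>0$ alone. It follows because $\alpha$ is connected, starts in $\Omega$, and misses $\partial\Omega$, since every point of $\partial\Omega$ lies at distance $\ge d_\Omega(x_1)>(1+\epsilon)r$ from $x_1$.

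There is, however, an error of direction in the length estimate. A lower bound $d_\Omega(z)\ge e^{-K}d_\Omega(x_1)$ on $\gamma$ only gives $\ell_k(\gamma)\le e^{K}\ell(\gamma)/d_\Omega(x_1)$. That is a \emph{lower} bound for $\ell(\gamma)$, so your ``Therefore'' does not follow from what you wrote. What you need is the upper bound
\[
d_\Omega(z)\le e^{K}d_\Omega(x_1)\qquad (z\in\gamma),
\]
since then $\ell(\gamma)=\int_\gamma d_\Omega(z)\,\frac{|dz|}{d_\Omega(z)}\le e^{K}d_\Omega(x_1)\,\ell_k(\gamma)$.

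This upper bound comes from the same estimate \eqref{(2.2)}, because that estimate controls the absolute value of the logarithm, so the repair is one line. Still, rephrase your ``key step'' and your closing remark in terms of bounding $d_\Omega$ from \emph{above} along $\gamma$. The substance of the closing remark survives under the correct direction. The naive bound $d_\Omega(z)\le d_\Omega(x_1)+d(x_1,z)$ is useless before you know $\gamma$ stays near $x_1$, which is why the quasihyperbolic estimate is needed.

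With this correction the argument is complete. It even gives the sharper bound $\ell(\gamma)\le\frac{a}{a-1}e^{1/(a-1)}d(x_1,x_2)$.
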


We shall repeatedly use the following elementary estimate in $Q$-doubling metric spaces.
\begin{lem}[{\cite[Lemma 4.1.11]{HKST-2015}}]\label{qs-5}
	Let $X=(X,d)$ be a $Q$-doubling metric space with constant $Q$ and $\Omega\subset X$ a domain.	Fix $R>0$ and $a\geq 1$ and let $r=\frac{R}{a}$. Then for any $x\in \Omega$, the ball $B(x,R)$ contains
	at most $b$ balls with radius $r$ such that they are disjoint from each other, where $b\leq Q^{[\log_2 a]}$. Here and hereafter, $[\cdot]$ means the greatest integer part.
\end{lem}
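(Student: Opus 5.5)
The statement just above is Lemma~\ref{qs-5}, a standard packing estimate in $Q$-doubling spaces, and I would prove it by iterating the doubling definition $k=[\log_2 a]$ times. The plan has two ingredients. First, a family of pairwise disjoint balls of radius $r$ has $2r$-separated centres: if two centres were at distance $<2r$, the midpoint of a near-geodesic would lie in both balls. Since the paper uses this lemma inside length spaces, I would either state the lemma there or interpret ``disjoint'' via separation of centres, as in \cite{HKST-2015}. Second, I need to bound the cardinality of an $r$-separated (or $2r$-separated) subset of $B(x,R)$.

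The key step is to show, by induction on $j$, that every $R2^{-j}$-separated subset of a ball $B(x,R)$ has at most $Q^{j}$ points, up to the harmless boundary issue with centres in the ball.
\begin{itemize}
\item The case $j=1$ is exactly the definition of $Q$-doubling.
\item For the inductive step, take a maximal $R/2$-separated subset $\{y_1,\dots,y_m\}$ of $B(x,R)$. Then $m\le Q$, and the balls $B(y_i,R/2)$ cover $B(x,R)$.
\item Any $R2^{-j}$-separated set $S\subset B(x,R)$ splits into the pieces $S\cap B(y_i,R/2)$.
\item Each piece is $(R/2)2^{-(j-1)}$-separated in a ball of radius $R/2$, so by induction it has at most $Q^{j-1}$ points.
\item Summing over the $m\le Q$ pieces gives $|S|\le Q^{j}$.
\end{itemize}

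To finish, I would apply this with $j$ chosen so that $R2^{-j}\le 2r=2R/a$. The condition is $2^{j}\ge a/2$, so $j=[\log_2 a]$ suffices, since $2^{[\log_2 a]}>a/2$. This gives $b\le Q^{[\log_2 a]}$.

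The only delicate point is bookkeeping at scales: the centres of the disjoint balls lie in $B(x,R)$, and the separation must be at least $2r$ rather than $r$. That extra factor of $2$ is exactly what absorbs the integer-part rounding. If the balls are merely required to lie inside $B(x,R)$, the same argument applies directly.
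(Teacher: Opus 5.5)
Your induction is anchored at $j=1$, but the concluding step takes $j=[\log_2 a]$, which equals $0$ for every $a\in[1,2)$. In that range you are tacitly using a $j=0$ case: that an $R$-separated subset of $B(x,R)$ has at most $Q^0=1$ point. This is false; the points $x\pm 0.6R$ in $\mathbb{R}$ already violate it.

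This gap cannot be closed, because the lemma's bound $b\le 1$ is itself false for $1<a<2$. It fails even in geodesic doubling spaces, and even with ``contains'' read as set inclusion. Take $X=[0,\tfrac32]\subset\mathbb{R}$, $x=\tfrac34$, $R=1$ and $a=\tfrac32$, so that $r=\tfrac23$ and $B(x,R)=X$. Then $B(0,\tfrac23)=[0,\tfrac23)$ and $B(\tfrac32,\tfrac23)=(\tfrac56,\tfrac32]$ are two disjoint balls of radius $r$ inside $B(x,R)$, whereas $Q^{[\log_2 a]}=1$.

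What your argument does prove, in length spaces, is $b\le Q^{\max\{1,[\log_2 a]\}}$. The case $a=1$ can be added by hand, since every ball $B(y,R)$ with $y\in B(x,R)$ contains $x$. You should state the lemma in that corrected form. It is all the paper needs, because in every application of the lemma here the ratio $a$ is at least $15$.

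Your restriction to length spaces is likewise necessary, not merely convenient. In a general metric space, disjointness of the balls only makes the centres $r$-separated, so your scheme would give $Q^{\lceil\log_2 a\rceil}$. The floor exponent then genuinely fails. For example, $X=\{-2,-1,0,1,2\}\subset\mathbb{R}$ is $3$-doubling, yet $B(0,3)=X$ contains the five pairwise disjoint balls $B(y,1)=\{y\}$, while $Q^{[\log_2 3]}=3$.
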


%Guo-Huang-Wang \cite{GHW} proved that Gromov hyperbolic domains in $Q$-doubling length spacesare indeed completely characterized by the geometric ball separation condition,  so it's easy to obtain the following Lemma, which is used for the proof of Theorem \ref{thm:main result varphi uniform}.

We shall use the following result in our later proof of Theorem \ref{thm:main result varphi uniform}. It was first proved in \cite[Lemma 2.27]{GHW-1} for $X$ being Gromov hyperbolic and under the current assumption with the aid of \cite[Theorem 1.7]{GHW}. 

\begin{lem}\label{Lemma-2.5}
	Suppose that $X$ is a $Q$-doubling length space, $D\subsetneq X$ satisfies $c_0$-ball separation condition, $ x_1 $, $ x_2 $, $ x_3 $ are points in $D$, 
	%and $\mu = \mu(Q, c_0)$ is a constant. 
	%Suppose further that 
	%there is a constant  $\tau \geq 0$ such that 
	and $ k_D(x_2, x_3) \leq 1$. If  
	$$\min \{k_D(x_1, x_2), k_D(x_1, x_3)\} > 1,$$
	then for any $ \vartheta \in \Lambda_{x_1 x_2} $ and $ \gamma \in \Lambda_{x_1 x_3} $, there exists a constant $C_0 = C_0(Q, c_0)\geq4$ such that
	$$\max \{\ell(\vartheta), \ell(\gamma)\} \leq C_0\min \{\ell(\vartheta), \ell(\gamma)\}.$$
	%where $C_0=4e^{4 + \frac{22}{3}\mu}.$
\end{lem}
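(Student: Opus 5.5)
The plan is to derive the comparability directly from the Gehring--Hayman inequality. Under the present hypotheses ($X$ a $Q$-doubling length space and $D$ satisfying the $c_0$-ball separation condition), \cite[Theorem 1.9]{GHW} shows that $D$ satisfies the $C_{GH}$-Gehring--Hayman inequality with $C_{GH}=C_{GH}(Q,c_0)$. I take this as a black box. With it, the lemma reduces to two elementary estimates. The first says that $x_2$ and $x_3$ are close relative to their boundary distances. The second says that a quasihyperbolic geodesic of quasihyperbolic length larger than $1$ has Euclidean length at least comparable to the boundary distance of its endpoint. By symmetry in $x_2\leftrightarrow x_3$, it suffices to prove $\ell(\vartheta)\le C_0\,\ell(\gamma)$.

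\emph{Step 1 (closeness of $x_2,x_3$).} From $k_D(x_2,x_3)\le 1$ and \eqref{(2.2)}, we get $|\log(d_D(x_2)/d_D(x_3))|\le 1$, so $e^{-1}d_D(x_3)\le d_D(x_2)\le e\,d_D(x_3)$. The first inequality in \eqref{(2.2)} also gives
$$\sigma_D(x_2,x_3)\le (e-1)\min\{d_D(x_2),d_D(x_3)\}\le (e-1)\,d_D(x_3).$$

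\emph{Step 2 (lower bound for $\ell(\gamma)$).} I claim $\ell(\gamma)\ge \tfrac12 d_D(x_3)$. Suppose instead that $\ell(\gamma)<\tfrac12 d_D(x_3)$. Then every $z\in\gamma$ satisfies
$$d_D(z)\ge d_D(x_3)-\ell(\gamma)>\tfrac12 d_D(x_3).$$
Hence
$$\ell_k(\gamma)\le \frac{2\,\ell(\gamma)}{d_D(x_3)}<1.$$
Since $\gamma$ is a quasihyperbolic geodesic, this says $k_D(x_1,x_3)<1$, contradicting the hypothesis. The same argument gives $\ell(\vartheta)\ge\tfrac12 d_D(x_2)$, which is used in the symmetric case.

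\emph{Step 3 (combine).} By Gehring--Hayman, the triangle inequality for $\sigma_D$, and $\sigma_D(x_1,x_3)\le\ell(\gamma)$,
$$\ell(\vartheta)\le C_{GH}\,\sigma_D(x_1,x_2)\le C_{GH}\bigl(\sigma_D(x_1,x_3)+\sigma_D(x_3,x_2)\bigr)\le C_{GH}\bigl(\ell(\gamma)+(e-1)d_D(x_3)\bigr).$$
Step 2 gives $d_D(x_3)\le 2\ell(\gamma)$, so
$$\ell(\vartheta)\le C_{GH}\bigl(1+2(e-1)\bigr)\ell(\gamma).$$
Interchanging the roles of $x_2$ and $x_3$ (using $d_D(x_2)\le 2\ell(\vartheta)$) gives the reverse inequality. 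Therefore we may take
$$C_0=\max\{4,\;C_{GH}(2e-1)\},$$
which depends only on $(Q,c_0)$.

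The only nontrivial ingredient, and the expected obstacle, is the Gehring--Hayman inequality. It is exactly where ball separation and doubling enter, and I rely on \cite[Theorem 1.9]{GHW} for it rather than reproving it. The remaining steps are routine uses of \eqref{(2.2)} and the definition of $\ell_k$. The only care needed is that Step 2 requires the hypothesis $\min\{k_D(x_1,x_2),k_D(x_1,x_3)\}>1$, since without it $\ell(\gamma)$ could be much smaller than $d_D(x_3)$ and the additive error $\sigma_D(x_2,x_3)$ could dominate.
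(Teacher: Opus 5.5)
Your proof is correct, and it takes a different route from the one the paper indicates. The paper gives no self-contained argument. It uses \cite[Theorem 1.7]{GHW} to see that $(D,k)$ is $\delta$-Gromov hyperbolic with $\delta=\delta(Q,c_0)$, and then defers to the argument of \cite[Lemma 2.27]{GHW-1}, which was formulated in the Gromov hyperbolic setting. You bypass hyperbolicity altogether and combine three ingredients:
\begin{itemize}
\item a single application of the Gehring--Hayman inequality \cite[Theorem 1.9]{GHW};
\item the bound $\sigma_D(x_2,x_3)\le (e-1)\min\{d_D(x_2),d_D(x_3)\}$ from \eqref{(2.2)};
\item the observation that $k_D(x_1,x_3)>1$ forces $d_D(x_3)\le 2\ell(\gamma)$, and likewise $k_D(x_1,x_2)>1$ forces $d_D(x_2)\le 2\ell(\vartheta)$.
\end{itemize}
Together these let the triangle inequality for $\sigma_D$ absorb the additive error $\sigma_D(x_2,x_3)$. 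Your route buys a short, fully verifiable proof with the explicit constant $C_0=(2e-1)C_{GH}$, which already exceeds $4$ because $C_{GH}>1$. Moreover, the paper invokes \cite[Theorem 1.9]{GHW} anyway in the proofs of Theorems~\ref{thm:main result varphi inner uniform} and~\ref{thm:main result varphi uniform}, so Lemma~\ref{Lemma-2.5} then costs no additional external input. The paper's route instead keeps the lemma inside the Gromov hyperbolic framework of \cite{GHW-1} where it first appeared. There $C_0$ is expressed through hyperbolicity data rather than through the Gehring--Hayman constant.
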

%\begin{proof}
%By \cite[Theorem 1.7]{GHW}, we know $D$ is $\delta$-Gromov hyperbolic with $\delta=\delta(Q, c_0)$. Then following \cite[Lemma 2.27]{GHW-1}, we can obtain the above fact.
%\end{proof}

%%%%%%%%%%%%%%%%%%%%%%%%%%%%%%%%%%%%%%%%%%%
%%%%%%%%%%%%%%%%%%%%%%%%%%%%%%%%%%%%%%%%%%%
\section{Proofs of the main results}\label{sec:proofs}
\subsection{Proof of Theorem \ref{thm:the equivalence of the three types of varphi uniform domain}}
In this subsection, we investigate the relationship between the three types of $\varphi$-uniform domain.

\begin{proof}[Proof of Theorem~\textup{\ref{thm:the equivalence of the three types of varphi uniform domain} \eqref{result 1}}]
	Assume that $\Omega$ is a $\varphi$-dist uniform domain. Fix $a,b\in \Omega$. We first consider the easy case	$\max\{d_\Omega(a), d_\Omega(b)\}>d(a,b).$
Without loss of generality, assume that $d_\Omega(a)>d(a,b)$. 
Let $$\varepsilon=\min\{d_\Omega(a)-d(a,b), d(a,b)\}$$
and $\gamma$ be an arc joining $a$ and $b$ with $\ell(\gamma)< d(a,b)+\varepsilon\leq d_\Omega(a)$.
Then $\gamma\subset B(a,d_\Omega(a))\subset\Omega$. For every $z\in\gamma$,
$$\ell(\gamma[z,b])=\ell(\gamma)-\ell(\gamma[a,z])\leq d_\Omega(a)-\ell(\gamma[a,z])\leq d_\Omega(z),$$
which implies that
\beqq
\min\{\operatorname{diam}(\gamma[a,z]),\operatorname{diam}(\gamma[z,b])\}\leq\ell(\gamma[z,b])\le d_\Omega(z)
\le \varphi(c\,d_\Omega(z)).
\eeqq
Moreover,
\beqq
\operatorname{diam}(\gamma)\le \ell(\gamma)<d(a,b)+\varepsilon\leq2d(a,b)\le \varphi(2c\,d(a,b)).
\eeqq
Thus the desired conclusion holds after a harmless enlargement of the constant.
Hence it remains to consider $\max\{d_\Omega(a), d_\Omega(b)\}\le d(a,b).$
	
Choose a $\varphi$-dist uniform curve $\gamma_0\subset \Omega$ joining $a$, $b$
and $z_0\in \gamma_0$ such that	$d(a,z_0)=d(z_0,b)=:t.$
	Consequently,
	\begin{equation}\label{eq:t-basic}
		\frac{1}{2}d(a,b)\le t\le \operatorname{diam}(\gamma_0)\le \varphi(c\,d(a,b)).
	\end{equation}
and
	\begin{equation}\label{eq:z0-boundary}
		t\le \varphi\bigl(c\,d_\Omega(z_0)\bigr).
	\end{equation}

	It suffices to construct an arc $\alpha$ joining $a$ and $z_0$ such that 
	for any $x\in\alpha[a,z_0]$, 
	\beq\label{eq:2.1}
	\diam (\alpha[a,x])\leq\psi(d_{\Omega}(x))
	\eeq
	and \beq\label{eq:2.2}
	\diam(\alpha)\leq\frac{1}{2}\psi(d(a,b)),
	\eeq
	where $$\psi(t)=2\varphi\Bigl(16c\,\varphi\bigl(2c\,\varphi(c\,t)\bigr)\Bigr).$$
	Similarly, we may obtain an analogous arc $\beta$ joining $b$ and $z_0$.
	Removing a loop from $\alpha\cup \beta$ yields the desired $\psi$-diam uniform curve.
	
	Set $$r=\frac14 d_\Omega(a)\le \frac14d(a,b) \leq \frac{t}{2},$$ then we may choose the unique integer $m\ge 2$ such that
	\begin{equation}\label{eq:dyadic-scale}
		2^{m-1}r\leq t< 2^m r.
	\end{equation}
	Choose a point $x_1\in \gamma_0[a,z_0]\cap S(a,r).$ 
	Inductively, for
	$1\le j\le m-1$, assume $x_j$ has been chosen on $S(a,2^{j-1} r)$.
	 Choose a $\varphi$-dist uniform curve	$\alpha_j$ joining $x_j$ and $z_0$, such that for every $x\in \alpha_j$,
	\begin{equation}\label{eq:alpha-j-cigar}
	\min\{	d(x_j,x), d(x,z_0)\}\leq \varphi\bigl(c\,d_\Omega(x)\bigr)
	\end{equation}
	and
	\begin{equation}\label{eq:alpha-j-turning}
		\operatorname{diam}(\alpha_j)\le \varphi\bigl(c\,d(x_j,z_0)\bigr).
	\end{equation}
	Then define $x_{j+1}$ as the first point of $\alpha_j$ lying on the
	sphere $S(a,2^j r)$.
	
	Let $\alpha_0'\subset \Omega$ be an arc joining $a$ and $x_1$ with length less than $2r$; such an arc exists since $X$ is a length space. For $1 \le j \le m-2$, set $\alpha_j' := \alpha_j[x_j, x_{j+1}]$, and let $\alpha_{m-1}' := \alpha_{m-1}$. After deleting finitely many loops from the union
	$\alpha_0' \cup \alpha_1' \cup \cdots \cup \alpha_{m-1}',$
	the resulting arc $\alpha$ joins $a$ and $z_0$. We now verify that it has the required properties.
	
	For $0 \le j \le m-2$, the inclusion $\alpha_0' \cup \cdots \cup \alpha_j' \subset \overline{B}(a, 2^j r)$ implies
	\begin{equation}\label{eq:prefix-diameter}
		\operatorname{diam}(\alpha_0' \cup \cdots \cup \alpha_j') \le 2^{j+1} r.
	\end{equation}
	For the remaining index $j = m-1$, we have
	\[
	d(x_{m-1}, z_0) \le d(x_{m-1}, a) + d(a, z_0) = 2^{m-2}r + t < 2t,
	\]
	and therefore, by \eqref{eq:alpha-j-turning},
	\[
	\operatorname{diam}(\alpha_{m-1}) \le \varphi(2ct).
	\]
	In view of \eqref{eq:t-basic} and the inequality $\varphi(2ct) \ge t \ge 2^{m-1}r$, it follows that
	\begin{equation}\label{eq:last-arc-diam}
		\operatorname{diam}(\alpha) \le 2\varphi(2ct) \le 2\varphi(2c\,\varphi(c\,d(a,b))).
	\end{equation}
	Thus \eqref{eq:2.2} is established.
	
We now prove \eqref{eq:2.1}. Fix $x \in \alpha$. By construction, there exists a unique 
$0 \le j \le m-1$ such that $x \in \alpha_j'$. We distinguish three cases according 
to the value of $j$.

\textbf{Case 1:}  $j = 0$. 

Then
\[
\operatorname{diam}(\alpha[a,x]) \le \ell(\alpha_0') \le 2r.
\]
Moreover,
\[
d_\Omega(x) \ge d_\Omega(a) - d(a,x) \ge d_\Omega(a) - \ell(\alpha_0') \ge 2r,
\]
hence
\[
\operatorname{diam}(\alpha[a,x]) \le d_\Omega(x). %\le \psi(d_\Omega(x)).
\]
%Thus \eqref{eq:2.1} holds in this case.

\textbf{Case 2:} $1 \le j \le m-2$. 

Then by \eqref{eq:prefix-diameter}, we have
\begin{equation}\label{eq:T_j}
	\operatorname{diam}(\alpha[a,x]) \leq\operatorname{diam}(\alpha_0' \cup \cdots \cup \alpha_j') \le 2^{j+1}r.
\end{equation}
Note that 
$$d(x_{j-1}, x_j) \ge d(x_j,a)-d(x_{j-1}, a) \geq2^{j-2}r$$
 and from \eqref{eq:dyadic-scale},
\[
d(x_j, z_0) \ge d(a, z_0) - d(x_j, a) \ge t - 2^{j-1}r > 2^{j-2}r.
\]
Thus, since $x_j \in \alpha_{j-1}$, applying \eqref{eq:alpha-j-cigar} to $\alpha_{j-1}$ at $x_j$ yields
	\begin{equation}\label{eq:xj-boundary}
		2^{j-2}r\le\min\{d(x_{j-1},x_j), d(x_j,z_0)\}\le \varphi\bigl(c\,d_\Omega(x_j)\bigr).
	\end{equation}
	
	We claim that 
	$$2^{j-2}r \le \varphi\bigl(2c\,\varphi(c\,d_\Omega(x))\bigr).$$
Indeed, if $d(x, x_j) \le \frac{1}{2c}\varphi^{-1}(2^{j-2}r)$, then by \eqref{eq:xj-boundary},
\[
d_\Omega(x) \ge d_\Omega(x_j) - d(x, x_j) \ge \frac{1}{2c}\varphi^{-1}(2^{j-2}r),
\]
hence
\[
2^{j-2}r \le \varphi(2c\,d_\Omega(x)) \le \varphi\bigl(2c\,\varphi(c\,d_\Omega(x))\bigr).
\]
If instead $d(x, x_j) > \frac{1}{2c}\varphi^{-1}(2^{j-2}r)$, since $x \in \alpha_j' \subset \overline{B}(a, 2^j r)$ and $j \le m-2$,
\[
d(x, z_0) \geq d(a, z_0)-d(x, a)\ge t - 2^j r \ge 2^j r \ge \frac{1}{2c}\varphi^{-1}(2^{j-2}r),
\]
so
\[
\min\{d(x, x_j), d(x, z_0)\} \ge \frac{1}{2c}\varphi^{-1}(2^{j-2}r).
\]
Applying \eqref{eq:alpha-j-cigar}, we get
\[
2^{j-2}r \le \varphi\bigl(2c\,\varphi(c\,d_\Omega(x))\bigr).
\]
Thus the claim holds.

Combining the claim with \eqref{eq:T_j} yields
\[
\operatorname{diam}(\alpha[a,x]) \le 2^{j+1}r \le 8\varphi\bigl(2c\,\varphi(c\,d_\Omega(x))\bigr).
\]
	
\textbf{Case 3:} $j = m-1$. 

Then by \eqref{eq:last-arc-diam},	\begin{equation}\label{eq:last-start}
		\operatorname{diam}(\alpha[a,x])\le\operatorname{diam}\alpha\le 2\varphi(2ct).
	\end{equation}

Claim that 
$$t \le 8\varphi\bigl(2c\,\varphi(c\,d_\Omega(x))\bigr).$$
Indeed, if $d(x, x_{m-1}) \le \frac{1}{2c}\varphi^{-1}(2^{m-3}r)$, using \eqref{eq:xj-boundary} at $j=m-1$ and arguing as above, we get
\[
2^{m-3}r \le \varphi(2c\,d_\Omega(x)),
\]
hence
\[
t \le 2^m r \le 8\varphi(2c\,d_\Omega(x)) \le 8\varphi\bigl(2c\,\varphi(c\,d_\Omega(x))\bigr).
\]
If $d(x,z_0)\le \frac{1}{2c}\,\varphi^{-1}(t),$
	then using \eqref{eq:z0-boundary},
	we have
	$$d_{\Omega}(x)\geq d_{\Omega}(z_0)-d(x,z_0)\geq\frac{1}{2c}\,\varphi^{-1}(t), $$
	hence $$t\le \varphi\bigl(2c\,d_\Omega(x)\bigr)\le 8\varphi\bigl(2c\,\varphi(c\,d_\Omega(x))\bigr).$$
If neither of the above holds, that is
	$$d(x,x_{m-1})>\frac{1}{2c}\,\varphi^{-1}(2^{m-3}r)
	\quad \text{and} \quad
	d(x,z_0)>\frac{1}{2c}\,\varphi^{-1}(t).$$
Applying \eqref{eq:alpha-j-cigar} to $\alpha_{m-1}$ at $x$, we obtain
	$$\frac{1}{2c}\,\varphi^{-1}(2^{m-3}r)\le \varphi\bigl(c\,d_\Omega(x)\bigr).$$
	Therefore
	\beqq
	t\le 2^{m}r\leq 8\,\varphi\!\bigl(2c\,\varphi(c\,d_\Omega(x))\bigr).
	\eeqq	
Thus the claim holds.	
	
Substituting this into \eqref{eq:last-start}, we conclude 
	\begin{equation*}\label{eq:last-case-3}
		\operatorname{diam}(\alpha[a,x])
		\le
		2\,\varphi\Bigl(16c\,\varphi\bigl(2c\,\varphi(c\,d_\Omega(x))\bigr)\Bigr).
	\end{equation*}

Hence \eqref{eq:2.1} holds in all cases. Since \eqref{eq:2.2} was already established, 
it follows that $\Omega$ is a $\psi$-diam uniform domain, where
$$\psi(t) = 2\varphi\Bigl(16c\,\varphi\bigl(2c\,\varphi(c\,t)\bigr)\Bigr).$$
\end{proof}

\begin{proof}[Proof of Theorem \textup{\ref{thm:the equivalence of the three types of varphi uniform domain} \eqref{result 2}}]
We first prove that  every $\varphi$-diam John domain is an $\eta$-length John domain, where $$\eta(t)=\max\{2\eta_1(2t), 2\eta_2(t)\},$$
and
$$\eta_1(t) = \int_0^{\varphi(Ct)} Q^{\log_2 160C\frac{s}{\varphi^{-1}(s)}} ds,\quad  \eta_2(t)=\left(Q^{\log_2 80\,\varphi\left(2Ct\right)/t}+C'\right)t,$$
provided $\eta_1$ converges and $C$, $C'$, satisfying $C\geq C'\geq1$, are constants depending only on the given data.

Since $\Omega$ is $\varphi$-diam John, then for each pair of points $x_1, x_2 \in \Omega$, there is a curve $\gamma\in \Omega$ joining $x_1$ and $x_2$ and $c \ge 1$, such that for each $x \in \gamma$, 
 $$\min\left\{\diam (\gamma[x_1, x]), \diam (\gamma[x_2, x])\right\} \leq \varphi\left(c\,d_\Omega(x)\right).  $$
Select $x_0 \in \gamma$ such that 
$$\diam (\gamma[x_1, x_0]) = \diam (\gamma[x_2, x_0]) \leq \varphi\left(c\,d_\Omega(x_0)\right).$$ 
Then for any $x \in \gamma[x_1, x_0]$ and $y \in \gamma[x_2, x_0]$,
\beq\label{eq:1.1}
\begin{aligned}
&\diam (\gamma[x_1, x]) = \min\left\{\diam (\gamma[x_1, x]), \diam (\gamma[x_2, x])\right\} \leq \varphi\left(c\,d_\Omega(x)\right), \\
&\diam (\gamma[x_2, y]) = \min\left\{\diam (\gamma[x_1, y]), \diam (\gamma[x_2, y])\right\} \leq \varphi\left(c\,d_\Omega(y)\right). 
\end{aligned}
\eeq
We now construct a curve $\gamma_0$ joining $x_1$ and $x_0$ such that for each $x \in \gamma_0$, 
\beq\label{eq:1.2}
\ell(\gamma_0[x_1, x])\leq\eta\left(d_{\Omega}(x)\right).
\eeq
Similarly, one constructs $\gamma'_0$ joining $x_2$ and $x_0$. 
The desired curve is then obtained by removing a loop from $\gamma_0\cup\gamma'_0$. 

Note that since $\varphi(t)$ is a increasing function and $ t/\varphi^{-1}(t) $ is decreasing when $0<t<1$ and increasing when $t\geq1$, so $\varphi(t)/t $ is decreasing when $0<t<\varphi^{-1}(1)$ and increasing when $t\geq\varphi^{-1}(1)$. 
%Assume without loss of generality that $\varphi^{-1}(1)=1$. 

We now split the proof into two cases according to the size of
$\operatorname{diam}(\gamma[x_1, x_0])$.

\textbf{Case 1:} $\operatorname{diam}(\gamma[x_1, x_0]) \le \varphi(10c\,\varphi^{-1}(1))$.

First, we construct a sequence of points along $\gamma[x_1, x_0]$.
Let $z_0 = x_0$. For $i \ge 0$, suppose $z_i$ has been defined. Let $z_{i+1} \in \gamma[x_1, z_i]$ be the last point along the direction from $z_i$ to $x_1$ such that
\[
d(z_{i+1}, z_i) = r_i := \frac{\varphi^{-1}\bigl(\operatorname{diam}(\gamma[x_1, z_i])\bigr)}{10c}.
\]
If for some $i_0 \in \mathbb{N}$ we have $z_{i_0} = x_1$, then we set $z_{i_0+k} = x_1$ for all $k \ge 0$. By \eqref{eq:1.1} and the assumption, we have
$d_\Omega(z_i) \ge 10r_i$ and $r_i < \varphi^{-1}(1)$
for all $i \in \mathbb{N}$.

By construction, $z_i, z_l \notin B(z_j, r_j)$ whenever $i < j < l$, and the balls $B(z_i, \frac{1}{2}r_i)$ are pairwise disjoint.

Since $X$ is a length space, for each $i \in \mathbb{N}$, there exists an arc $\alpha_i$ joining $z_i$ to $z_{i+1}$ such that
\begin{equation}\label{eq:length}
	\ell(\alpha_i) \le 2d(z_i, z_{i+1}) = 2r_i \le \frac{1}{5}d_\Omega(z_i).
\end{equation}
Hence $\alpha_i \subset \Omega$. Let
$\alpha' = \bigcup\limits_{i \in \mathbb{N}} \alpha_i.$
After deleting loops from $\alpha'$, we obtain a curve joining $x_1$ to $x_0$, denoted by $\gamma_0$. We now prove that this curve is the desired one.

Write $L=\diam (\gamma[x_1, x_0])$ and 
$$A_j =\{z_l:  2^{-j-1}L<\diam (\gamma[x_1, z_l])\leq 2^{-j}L, \, l\in\mathbb{N}\} , \quad j \geq 0.$$ 
Clearly if $z_l \in A_j$, since  $\varphi(t)\geq t$ and $\varphi$ is increasing, we have 
\beq\label{eq:1.4}
\frac{1}{10c}\varphi^{-1}\!\left(2^{-j-1} L\right)< r_l\leq \frac{1}{10c}\varphi^{-1}\!\left(2^{-j}L\right)\leq \frac{1}{10c}2^{-j}L,
\eeq
and by \eqref{eq:1.1},
\beq\label{eq:1.4-1}
d_{\Omega}(z_l)\geq \frac{1}{c}\varphi^{-1}\left(2^{-j-1}L\right).
\eeq
Then for $y\in B(z_l, r_l)$, by \eqref{eq:1.4},
$$d(x_1, y) \leq d(x_1, z_l) + d(z_l, y) \leq \diam(\gamma[x_1, z_l])  + r_l \leq 2^{-j+1}L,$$ 
so $$B\left(z_l,  \frac{1}{20c}\varphi^{-1}\left(2^{-j-1}L\right)\right)\subset B(z_l, r_l/2)\subset B(z_l, r_l)
\subset B\left(x_1, 2^{-j+1}L\right).$$
Since $ (X, d)$ is $Q$-doubling,  Lemma \ref{qs-5} implies $$\Card(A_j)\leq Q^{\left[\log_2 20c\, \frac{2^{-j+1}L}{\varphi^{-1}\left(2^{-j-1}L\right)}\right]}.$$

Fix $x \in \gamma_0$, then there exists $i_1\in \mathbb{N}$ such that $x \in \alpha_{i_1}$. Let $\alpha'_{i_1}=\bigcup\limits_{j=i_1}^{\infty}\alpha_i$ and $j_1$ be the least index $j$ such that
$z_{i_1}\in A_{j_1}$, by \eqref{eq:length} and \eqref{eq:1.4},  

\beq\label{eq:1.5}
\ell(\alpha'_{i_1}) \leq 2\sum_{j={i_1}}^{\infty} d(z_j, z_{j+1}) = 2\sum_{j=j_1}^{\infty} \sum_{\substack{z_l\in A_j \\  l \geq i_1}} r_l\leq \frac{1}{5c}\sum_{j=j_1}^{\infty} 2^{-j}L\,Q^{\log_2 20c\, \frac{2^{-j+1}L}{\varphi^{-1}\left(2^{-j-1}L\right)}}.
\eeq
Let 
$j_1'\geq0$ be the unique integer such that  $2^{-j-1}L\geq1$ when $j< j_1'$  and  $2^{-j-1}L<1$ when $j\geq j_1'$. 
Without loss of generality, we may assume that $j_1'> j_1\geq0$. 
Since the function $t/\varphi^{-1}(t)$ is decreasing when $0<t<1$ and increasing when $t\geq1$,   
we obtain that
\beq\label{eq:1.6}
\begin{aligned}
	&\quad\sum_{j=j_1}^{\infty} 2^{-j}L\,Q^{\log_2 20c\, \frac{2^{-j+1}L}{\varphi^{-1}\left(2^{-j-1}L\right)}}\\
	& =\sum_{j=j_1}^{j_1'-1} 2^{-j}L\,Q^{\log_2 20c\, \frac{2^{-j+1}L}{\varphi^{-1}\left(2^{-j-1}L\right)}}+ \sum_{j=j'_1}^{\infty} 2^{-j}L\,Q^{\log_2 20c\, \frac{2^{-j+1}L}{\varphi^{-1}\left(2^{-j-1}L\right)}}\\
	&  \leq C_1+ 4 \int_0^{2^{-j'_1-1}L} Q^{\log_2 80c\,\frac{t}{\varphi^{-1}(t)}}dt\\
	&\leq C_1+ 4 \int_0^{\varphi(c\,d_{\Omega}(z_{i_1}))} Q^{\log_2 80c\,\frac{t}{\varphi^{-1}(t)}}dt,	
\end{aligned}
\eeq
where $$C_1=\sum_{j=j_1}^{j_1'-1} 2^{-j}L\,Q^{\log_2 8\frac{ \varphi\left(10c\,\varphi^{-1}(1)\right)}{\varphi^{-1}(1)}},$$
 and the last inequality is due to \eqref{eq:1.4-1}.
Since $\varphi(t)\geq t$, by \eqref{eq:1.5} and \eqref{eq:1.6},
\beqq
\begin{aligned}
	\ell(\alpha'_{i_1})&\leq \frac{1}{5c}C_1+ \frac{4}{5c}  \int_0^{\varphi(c\,d_{\Omega}(z_{i_1}))} Q^{\log_2 80c\frac{t}{\varphi^{-1}(t)}}dt\\
	&\leq \int_0^{C_1+\varphi(c\,d_{\Omega}(z_{i_1}))} Q^{\log_2 80c\frac{t}{\varphi^{-1}(t)}}dt\\
	&\leq  \int_0^{\varphi(Cd_{\Omega}(z_{i_1}))} Q^{\log_2 80c\frac{t}{\varphi^{-1}(t)}} dt,
\end{aligned}
\eeqq
where $C\geq c$
%$$C\geq 8\varphi\!\left(10c\right) Q^{\log_2 8\,\frac{ \varphi\left(10c\,\varphi^{-1}(1)\right)}{\varphi^{-1}(1)}}+ 4\int_0^{1} Q^{\log_2 80c\,\frac{t}{\varphi^{-1}(t)}} dt$$
is a constant related to $c,Q$ and the doubling constant of $\varphi^{-1}$, and the last inequality holds based on \eqref{eq:1.4-1} and Lemma \ref{lem-3-1}.
Therefore $$\ell(\alpha'_{i_1})\leq \eta_1(d_{\Omega}(z_{i_1})),$$ 
where $$\eta_1(t) = \int_0^{\varphi(Ct)} Q^{\log_2 160C\frac{s}{\varphi^{-1}(s)}} ds,$$
under the standing assumption that the integral converges. 

On the other hand, for any $\varepsilon > 0$, we may pick $y \in \partial\Omega$ such that
\beq\label{eq:1.13}
d_{\Omega}(x) \geq d(y, x)-\varepsilon\geq d(y, z_{i_1}) - d(z_{i_1}, x) - \varepsilon \stackrel{\eqref{eq:length}}{\geq} d_{\Omega}(z_{i_1}) - 2r_{i_1} - \varepsilon \geq \frac{4}{5}d_{\Omega}(z_{i_1}) - \varepsilon,
\eeq
which implies that 
\beq\label{eq:boundary distance}
d_{\Omega}(z_{i_1}) \leq \frac{5}{4}d_{\Omega}(x). 
\eeq
Then
\beq\label{eq:1.7}
\ell(\gamma_0[x_1, x])\leq\ell(\alpha'[x_1, x])\leq\ell(\alpha'_{i_1})\leq \eta_1(d_{\Omega}(z_{i_1}))\leq\eta_1(2d_{\Omega}(x)).
\eeq 

In particular,  from \eqref{eq:1.5} and \eqref{eq:1.6}, we know
\beq\label{eq:1.8}
\begin{aligned}
	\ell(\alpha')\leq\ell(\alpha'_0)
	&\leq \varphi\!\left(10c\,\varphi^{-1}(1)\right) Q^{\log_2 8\,\frac{ \varphi\left(10c\,\varphi^{-1}(1)\right)}{\varphi^{-1}(1)}}+ \int_0^{1} Q^{\log_2 80c\,\frac{t}{\varphi^{-1}(t)}} dt
	\leq C_2,
\end{aligned}\eeq
where $C_2\geq1$ is a constant depended on the given data.

\textbf{Case 2:} $\diam(\gamma[x_1, x_0])> \varphi\left(10c\,\varphi^{-1}(1)\right)$. 

Let $y_1$ be the first point on $\gamma[x_1, x_0]$ such that 
$\diam(\gamma[x_1, y_1]) = \varphi\left(10c\,\varphi^{-1}(1)\right)$. 
On the arc $\gamma[x_1, y_1]$, 
%since $\diam(\gamma[x_1, y_1]) = \varphi(10c)$, 
we apply the construction in Case~1 to obtain a curve $\gamma_1$ 
joining $x_1$ and $y_1$.
For the remaining arc $\gamma[y_1, x_0]$, we proceed as follows. 
Set $w_0 = x_0$ and define inductively: 
for each $j \ge 0$, let
\[
r'_j = \frac{\varphi^{-1}\bigl(\diam(\gamma[x_1, w_j])\bigr)}{10c}.
\]
If $d(w_j, y_1) < r'_j$, set $p = j$ and stop. 
Otherwise, let $w_{j+1} \in \gamma[y_1, w_j]$ be the last point 
from $w_j$ toward $y_1$ with $d(w_{j+1}, w_j) = r'_j$, 
and continue. Finally, set $w_{p+1}=y_1$.

By \eqref{eq:1.1} and the assumption, we know 
$d_{\Omega}(w_j) \geq 10r'_j$ and $r'_j\geq\varphi^{-1}(1)$ for all $j \in \{0,\cdots,p\}$. Clearly, $w_i\notin B(w_j, r'_j)$ for $i\neq j$, and the balls
$B\left(w_j, \frac{1}{2} r'_j\right)$ are pairwise disjoint for $j\in\{0,\cdots,p\}$. 

Similarly, we also can find an arc $\beta_j\subset\Omega$ joining $w_j$ to $w_{j+1}$ such that
\begin{equation}\label{eq:length2}
	\ell(\beta_j) \le 2d(w_j, w_{j+1}) = 2r'_j \le \frac{1}{5}d_\Omega(w_j).
\end{equation}
Removing finitely many loops from $\gamma_1\cup\gamma_2$, where $\gamma_2 = \bigcup\limits_{j=0}^{p} \beta_i,$ yields a curve $\gamma_0$ joining $x_1$ and $x_0$. We now show that $\gamma_0$ is the desired curve.

Fix $x \in \gamma_0$. If $x \in \gamma_1$, by \eqref{eq:1.7}, we know $$\ell(\gamma_0[x_1,x])\leq\ell(\gamma_1[x_1,x])\leq\eta_1(2d_{\Omega}(x)).$$
It thus remains to handle the case $x \in \gamma_2$ for which there exists $i_2 \in \{0,\cdots,p\}$ so that $x \in \beta_i.$ 
 
If $i_2=p$, then from \eqref{eq:1.8} and \eqref{eq:length2},   
\beqq 
\ell(\gamma_0[x_1,x])\leq\ell(\gamma_1)+\ell(\gamma_2[y_1,x])\leq C_2+2r'_p\leq C_2+\frac{1}{5}d_{\Omega}(w_p)\leq \left(\frac{C_2}{\varphi^{-1}(1)}+\frac{1}{4}\right)d_{\Omega}(x),
\eeqq
where the last inequality can be obtained by a discussion similar to that of \eqref{eq:1.13} and \eqref{eq:boundary distance}.

If  $i_2\leq p-1$, then $p\geq1$. Set 
$$\kappa=Q^{\left[\log_2 8\varphi(10c\,r'_{i_2})/r'_{i_2}\right]}+1.$$ 
%where $c'\geq1$ is a constant, depending only on $c$ and the doubling constant of $\varphi^{-1}$.  
Claim that 
$r'_{i+k}\leq\frac{1}{2}r'_i$ 
whenever $i\geq i_2$, $k\geq\kappa$ and $i, \cdots,i+k\in\{1, \cdots,p\}$.

Suppose to the contrary that for any fixed $i\geq i_2$, 
$$r'_j > \frac{1}{2} r'_i\quad \text{for} \quad j = i+1, \cdots, i+k.$$  
We will show that $k$ is bounded.  
Indeed, for each $i \leq j \leq i + k$ and each $y \in B(w_j, r'_j/2)$, we have  
$$d(x_1, y) \leq d(x_1, w_j) + d(w_j, y) \leq \varphi(10c\,r'_j) + r'_j/2 \leq \varphi(10c\,r'_{i})+r'_{i}/2\leq2\varphi(10c\,r'_{i}).$$
%where $c'\geq1$ is a constant, depending only on $c$ and the doubling constant of $\varphi^{-1}$.
Thus 
$$B(w_j, r'_i/4) \subset B(w_j, r'_j/2) \subset B(x_1, 2\varphi(10c\,r'_{i})).$$ 
Since $ (X, d)$ is $Q$-doubling and $r'_i\geq\varphi^{-1}(1)$, Lemma \ref{qs-5} implies  
$$k \leq Q^{\left[\log_2 8\varphi(10c\,r'_{i})/r'_{i}\right]} \leq Q^{\left[\log_2 8\varphi(10c\,r'_{i_2})/r'_{i_2}\right]},$$
and so the claim holds for all $ k \geq \kappa = Q^{\left[\log_2 8\varphi(10c\,r'_{i_2})/r'_{i_2}\right]}+1$.

Following \eqref{eq:1.8} and \eqref{eq:length2}, we have
\beq\label{eq:1.11}
\begin{aligned}
\ell(\gamma_0[x_1, x])  &\leq \ell(\gamma_1)+2 \sum_{j=i_2}^{p} r'_j\leq C_2+2 \sum_{k=0}^{[\frac{p}{\kappa}]} \left( \sum_{j=k\kappa}^{(k+1)\kappa-1} r'_{i_2+j} \right)+2r'_p \\
&\leq C_2+2 \left(\sum_{k=0}^{\infty} \kappa\,  2^{-k}+1\right)r'_{i_2} \leq C_2+(\frac{2}{5}\kappa+\frac{1}{5}) \,d_{\Omega}(w_{i_2}) \\
&\stackrel{\eqref{eq:length2}}{\leq} \left(\frac{2}{5}\kappa+\frac{C_2}{\varphi^{-1}(1)}+\frac{1}{5}\right)d_{\Omega}(w_{i_2}).
\end{aligned} 
\eeq
In the sequel, we adopt the convention that $r'_{p+k}=0$ whenever $k\geq1$.

On the other hand, similar to \eqref{eq:1.13}, we have 
$$d_{\Omega}(w_{i_2}) \leq \frac{5}{4}d_{\Omega}(x) \quad \text{and} \quad \varphi^{-1}(1)\leq r'_{i_2}\leq\frac{1}{8}d_{\Omega}(x).$$
Therefore
$$\kappa = Q^{\left[\log_2 8\varphi(10c\,r'_{i_2})/r'_{i_2}\right]}+1\leq Q^{\left[\log_2 64\varphi\left(2c\,d_{\Omega}(x)\right)/d_{\Omega}(x)\right]}+1.$$
It follows from \eqref{eq:1.11} that
\[\ell(\gamma_0[x_1, x]) \leq \left(\frac{1}{2}Q^{\left[\log_2 64\,\varphi\left(2c\,d_{\Omega}(x)\right)/d_{\Omega}(x)\right]}+\frac{5}{4\,\varphi^{-1}(1)}C_2+\frac{3}{4}\right)d_{\Omega}(x) \leq \eta_2 (d_{\Omega}(x)),\]
where, up to a change of constants, 
$$\eta_2(t)=\left(Q^{\log_2 80\,\varphi\left(2Ct\right)/t}+C'\right)t,$$
and $C\geq C'\geq1$.

From the two cases considered above, we obtain the unified estimate by setting 
$$\eta(t)=\max\{2\eta_1(2t), 2\eta_2(t)\}.$$
With this choice of $\eta$, \eqref{eq:1.2} holds in both cases. One easily checks that $\eta$ is continuous and increasing, with $\eta(0) = 0$ and $ \eta(t) \geq t$ for all $t > 0$.
Therefore $\Omega$ is a $\eta$-length John domain.

In the following, we prove that  $\varphi$-diam uniform domain is $\eta$-length uniform domain. 

Based on the previous proof, we further assume that
\begin{equation}\label{eq:1.12}
	\operatorname{diam}(\gamma) \le \varphi\bigl(c\,d(x_1,x_2)\bigr).
\end{equation}
It suffices to prove that the curve $\gamma_0$ constructed above satisfies
$$\ell(\gamma_0) \le \frac{1}{2}\eta(d(x_1,x_2)).$$

If $\operatorname{diam}(\gamma[x_1, x_0]) \le \varphi(10c\,\varphi^{-1}(1))$, then from \eqref{eq:1.5}, \eqref{eq:1.6} and \eqref{eq:1.7}, we obtain
\begin{equation*}
\begin{aligned}
\ell(\gamma_0) &\leq C_1+ \int_0^{\diam(\gamma[x_1, x_0])} Q^{\log_2 80c\,\frac{t}{\varphi^{-1}(t)}} dt\\
 &\stackrel{\eqref{eq:1.12}}{\leq} C_1+ \int_0^{\varphi\left(c\,d(x_1,x_2)\right)} Q^{\log_2 80c\,\frac{t}{\varphi^{-1}(t)}} dt\\ 
&\stackrel{\eqref{eq:1.12}}{\leq}\eta_1(d(x_1,x_2))\leq \frac{1}{2}\eta(d(x_1,x_2)).
\end{aligned}
\end{equation*}

If instead $\operatorname{diam}(\gamma[x_1, x_0]) > \varphi(10c\,\varphi^{-1}(1))$, then by \eqref{eq:1.11},
\beq\label{1.14}
\ell(\gamma_0) \leq \ell(\gamma_1)+2 \sum_{j=0}^{p} r'_j 
\leq C_2+2r'_p+2 \sum_{k=0}^{\infty} \kappa\, 2^{-k}r'_0
 \leq C_2+(4\kappa+2) r'_0,
\eeq
where $$\varphi^{-1}(1)\leq r'_0=\frac{\varphi^{-1}\left(\diam\left(\gamma[x_1, w_0]\right)\right)}{10c}\leq \frac{\varphi^{-1}\left(\diam\,\gamma\right)}{10c}\stackrel{\eqref{eq:1.12}}{\leq}\frac{1}{10}d(x_1,x_2), $$
and $$\kappa = Q^{\left[\log_2 8\,\varphi(10c\,r'_{0})/r'_{0}\right]}+1\leq Q^{\left[\log_2 80\,\varphi\left(c\,d(x_1,x_2)\right)/d(x_1,x_2)\right]}+1. $$
From \eqref{1.14}, we obtain
\begin{equation}
	\begin{aligned}
		\ell(\gamma_0)
		&\leq \left(\frac{C_2}{\varphi^{-1}(1)}+4\kappa+2\right) r'_0\\
		&\leq \left(\frac{C_2}{10\,\varphi^{-1}(1)}+\frac{2}{5}Q^{\left[\log_2 80\,\varphi\left(c\,d(x_1,x_2)\right)/d(x_1,x_2)\right]}+\frac{3}{5}\right)d(x_1,x_2)\\
		&\leq \eta_2(d(x_1,x_2))\leq \frac{1}{2}\eta(d(x_1,x_2)).
	\end{aligned}
\end{equation} 

Therefore, $\Omega$ is an $\eta$-length uniform domain. 
The proof of Theorem~\ref{thm:the equivalence of the three types of varphi uniform domain} is now complete.
\end{proof}

\subsection{Proof of Corollary~\ref{cor:quasihyperbolic geodesic}}
This subsection is devoted to the proof of Corollary~\ref{cor:quasihyperbolic geodesic}, which follows from Theorem~\ref{thm:the equivalence of the three types of varphi uniform domain} \eqref{result 2}.
\begin{proof}[Proof of Corollary~\textup{\ref{cor:quasihyperbolic geodesic}}]
For each pair of points $x_1, x_2 \in \Omega$, there is a quasihyperbolic geodesic $\gamma$ joining $x_1$ and $x_2$.
If
 $$ d(z_1, z_{2}) \leq r_1=\frac{\varphi^{-1}\left(\diam\left(\gamma[x_1, z_1]\right)\right)}{10c}\leq\frac{1}{10}d_{\Omega}(z_1),$$
then Lemma~\ref{Lemma-2.1} gives
$$\ell(\gamma[z_1, z_{2}])\leq\frac{100}{81}e^{\frac{10}{81}}d(z_1,z_2) \le 2d(z_1, z_{2}).$$
The proof of Corollary~\ref{cor:quasihyperbolic geodesic} is  therefore completed by following the argument in Theorem~\ref{thm:the equivalence of the three types of varphi uniform domain} \eqref{result 2}.
 \end{proof}	
\subsection{Proofs of Theorems \ref{thm:main result varphi inner uniform} and \ref{thm:main result varphi uniform}}

%The proof of Theorem \ref{thm:main result varphi inner uniform} 
In this subsection, we will prove Theorem \ref{thm:main result varphi inner uniform} and Theorem \ref{thm:main result varphi uniform} with the aid of \cite[Theorem 1.9]{GHW}.

\begin{proof}[Proof of Theorem \ref{thm:main result varphi inner uniform}]
Since $\Omega$ satisfies $c_0$-ball separation condition, by \cite[Theorem 1.9]{GHW}, we know that $\Omega$ satisfies $c_1$-Gehring-Hayman inequality with $c_1=c_1(Q, c_0)$. Thus, for any
$x_1, x_2\in \Omega$ and $\gamma=\gamma_{x_1x_2}\in\Lambda_{x_1x_2}(\Omega)$, we have
\beq\label{220}
\ell(\gamma)\leq c_1 \sigma(x_1, x_2).
\eeq
It suffices to prove for each $x\in \gamma$, 
there is a constant $c_2\geq1$ such that
\beq\label{221} 
\min \left\{\ell(\gamma[x_1, x]), \ell( \gamma[x_2, x])\right\} \leq \eta(c_2\,d_{\Omega}(x)).
\eeq
Select $x_0 \in \gamma$ such that
\beq\label{222}
\min \left\{\diam( \gamma[x_1, x_0]), \diam( \gamma[x_2, x_0])\right\} \geq \frac{1}{3} \diam(\gamma).
\eeq

We claim  that for each $i \in \{1,2\}$ and $x\in  \gamma[x_i, x_0]$,
\be\label{H25-0729-1}
\diam( \gamma[x_i ,x])\leq 4c\, \varphi(c_0d_{\Omega}(x)).
\ee
Once this claim is proved, it follows that $\Omega$ is a $\varphi$-diam John domain. By Corollary~\ref{cor:quasihyperbolic geodesic}, $\gamma$ is an $\eta$-length John curve satisfying \eqref{221}, where $\eta$ is as defined therein. Thus it remains to prove the above claim. 

It suffices, by symmetry, to consider the case $x\in \gamma[x_1,x_0]$.
We will prove that
\beq\label{333}
\diam(\gamma[x_1, x]) \leq 4c\, \varphi(c_0d_{\Omega}(x)).
\eeq
Choose $u\in \gamma([x_1,x])\,\cap\,\Omega \backslash B \left(x,\frac{1}{4}\diam( \gamma[x_1, x])\right)$, 
and by \eqref{222}, one can find $v \in \gamma[x_2,x]\,\cap\,
\Omega \backslash B \left(x,\frac{1}{4}\diam(\gamma[x_1, x])\right)$. The $\varphi$-LC-2 condition provides a constant $c>0$ and a curve
$\beta\subset\Omega$ joining $u$ and $v$ such that
$$\beta \subset \Omega\backslash B\left(x,\varphi^{-1}(\frac{1}{4c}\diam( \gamma[x_1, x]))\right).$$
Applying the $c_0$-ball separation condition to $\gamma[u, v]$ and $\beta$, we obtain
\beqq
B_{\sigma}(x,c_0d_{\Omega}(x)) \cap \beta \neq \emptyset,
\eeqq
hence
\beqq
\diam(\gamma[x_1, x]) \leq 4c\,\varphi(c_0d_{\Omega}(x)).
\eeqq
This establishes \eqref{333} and  thereby completes the proof of \eqref{H25-0729-1}. The proof of Theorem~\ref{thm:main result varphi inner uniform} is therefore complete.
\end{proof}

\begin{proof}[Proof of Theorem \ref{thm:main result varphi uniform}]

	By the proof of Theorem \ref{thm:main result varphi inner uniform},  we know 
	%that  every 	quasihyperbolic geodesic $\gamma$ in $\Omega$ is $\varphi$-inner uniform curve, that is  
	for any $x, y\in \Omega$ and $\gamma_{xy}\in \Lambda_{xy}(\Omega)$, there exists a constant $c\geq1$ such that for all $z\in \gamma_{xy}$, it holds
	\beq\label{H25-0525-0}
	\min\{\ell(\gamma_{xy}[x,z]), \ell(\gamma_{xy}[y,z])\}\leq \eta(c\,d_\Omega(z)),
	\eeq
	and
	\beq\label{H25-0525-1}
	\ell(\gamma_{xy})\leq c\,\sigma(x,y).
	\eeq
	
	Next, we prove that 
	$$\ell(\gamma_{xy})\leq e^{2c\kappa C_0}d(x, y),$$ 
	where $C_0=C_0(Q, c_0)$ is a constant given by Lemma \ref{Lemma-2.5} and $$\kappa=\kappa(d(x,y))=2Q^{\log_2 15c\,\varphi(c'd(x,y))/\eta^{-1}(d(x,y))}+4,$$
	 with $C_4$ a relatively large constant related to the given data.
	
	Suppose on the contrary that there exists some $\gamma_0\in\Lambda_{xy}(\Omega)$  such that
	$$\ell(\gamma_0)> e^{2c\kappa C_0}d(x, y).$$
	Then
	\be\label{H25-0525-2}
	d(x,y)\ge\frac{1}{2}\max\{d_\Omega(x),d_\Omega(y)\}.\ee
	Otherwise, by Lemma \ref{Lemma-2.1},
	$$\ell(\gamma_0)\le\frac{20}{9}e^{\frac{10}{9}}d(x,y).$$
	This contradiction shows that \eqref{H25-0525-2} holds.
	
	%Thus by \eqref{(2.1)} and \eqref{H25-0525-2}, we have
	%$$k_\Omega(x,y)=\ell_k(\gamma_0)\ge \log \Big(1+\frac{\ell(\gamma_0)}{\min\{d_\Omega(x),d_\Omega(y)\}}\Big)>1.$$
Since \(\Omega\) is \(\varphi\)-LC-1, for \(x, y \in B(x, \frac{6}{5}d(x,y)) \cap \Omega\), there exist \(c' \ge 1\) and a curve
\[
\alpha \subset B(x, \varphi(c'd(x,y))) \cap \Omega
\]
joining \(x\) and \(y\).

\textbf{Step 1.} Construct the sequences $\{x_{0,i}\}$.

From \eqref{H25-0525-1}, for any $\gamma\in \Lambda_{xy}(\Omega)$,
\beq\label{H25-0525-3}
\ell(\gamma)\geq\sigma(x,y)\geq  \frac{1}{c}\ell(\gamma_0)> e^{c\kappa C_0}d(x,y).\eeq
Let \(x_0 = x\), and let \(x_1\) be the last point on \(\alpha\) along the direction from \(x\) to \(y\) such that, for some \(\gamma_1 \in \Lambda_{xx_1}(\Omega)\),
\[
\ell(\gamma_1) \le e^{cC_0} d(x,y).
\]
Based on \eqref{H25-0525-3}, we repeat this procedure to obtain \([\kappa]\) successive points \(x_i \in \alpha\). For each \(i \in \{1, \dots, [\kappa]\}\), \(x_i\) is the last point in \(\alpha[x_{i-1}, y]\) along the direction from \(x_{i-1}\) to \(y\) such that, for some \(\gamma_i \in \Lambda_{xx_i}(\Omega)\),
\begin{equation}\label{H25-0520-5}
	\ell(\gamma_i) \le e^{cC_0i} d(x,y),
\end{equation}
and consequently,
\begin{equation}\label{H25-0520-6}
	\ell(\gamma_i) > e^{cC_0(i-1)} d(x,y) \ge e^{cC_0} d(x,y), \quad i \in \{2, \dots, [\kappa]\}.
\end{equation}
For each \(i \in \{2, \dots, [\kappa]\}\), let \(x_{0,i} \in \gamma_i\) be the first point along the direction from \(x_i\) to \(x\) such that
\begin{equation}\label{H25-0520-7}
	\ell(\gamma_i[x_i, x_{0,i}]) = d(x,y).
\end{equation}

\textbf{Step 2.} Prove that there exist $x_{0,s}, x_{0,t}$ with $s + 1 < t$ such that $k_\Omega(x_{0,s}, x_{0,t}) < 1$.

From \eqref{H25-0520-6} and \eqref{H25-0520-7}, we obtain
\begin{equation}\label{H25-0520-11}
	\ell(\gamma_i[x, x_{0,i}]) = \ell(\gamma_i) - \ell(\gamma_i[x_i, x_{0,i}]) > (e^{cC_0}- 1) d(x,y) > \ell(\gamma_i[x_i, x_{0,i}]),
\end{equation}
which, together with \eqref{H25-0525-0}, shows that
\begin{equation}\label{H25-0520-9}
	d(x,y) = \min\{\ell(\gamma_i[x_i, x_{0,i}]), \ell(\gamma_i[x, x_{0,i}])\} \le \eta(c\,d_\Omega(x_{0,i})).
\end{equation}

For each \(i \in \{2, \dots, [\kappa]\}\), set
\[
B_{1,i} := B\left(x_{0,i}, \frac{1}{5c}\eta^{-1}(d(x,y))\right).
\]
Then \eqref{H25-0520-9} implies that
\[
\overline{B_{1,i}} \subset B\left(x_{0,i}, \frac{1}{2}d_\Omega(x_{0,i})\right) \subset \Omega.
\]
Since \(\alpha \subset B(x, \varphi(c'd(x,y))) \cap \Omega\), for each \(u \in \overline{B_{1,i}}\),
\[
\begin{aligned}
	d(x, u)
	&\le d(x, x_i) + d(x_i, x_{0,i}) + d(x_{0,i}, u) \\
	&\le \varphi(c'd(x,y)) + d(x,y) + \frac{1}{5c}\eta^{-1}(d(x,y)) \\
	&\le 3\varphi(c' d(x,y)),
\end{aligned}
\]
 where the last inequality follows from \(\varphi(t) \ge t\) and \(\eta(t) \ge t\) for $ t>0$, together with the fact that $\eta$ is increasing.
 %  and Lemma~\ref{lem-3-1}, \(c'' \ge 1\) is a constant related to \(c\) and the doubling constant of \(\varphi^{-1}\).

Let
\[
B_{0,0} := B(x, 3\varphi(c' d(x,y))).
\]
Then \(\overline{B_{1,i}} \subset B_{0,0}\) for all \(i\).
Since
\[
[\kappa] - 2 > 2Q^{\left[\log_2 15c\,\varphi(c' d(x,y))/\eta^{-1}(d(x,y))\right]},
\]
Lemma~\ref{qs-5}, applied with \(R = 3\varphi(c' d(x,y))\) and \(r = \frac{1}{5c}\eta^{-1}(d(x,y))\), implies that there exist integers \(s, t\) with
$s + 1 < t \in \{3, \dots, [\kappa]\}$
such that
\[
\overline{B_{1,s}} \cap \overline{B_{1,t}} \neq \varnothing.
\]
Consequently,
\[
d(x_{0,t}, x_{0,s}) \le \frac{2}{5c}\eta^{-1}(d(x,y)) \stackrel{\eqref{H25-0520-9}}{\le} \frac{2}{5} d_\Omega(x_{0,t}).
\]
Applying Lemma~\ref{Lemma-2.1} with \(a = \frac{5}{2}\) gives
\[
k_\Omega(x_{0,s}, x_{0,t}) \le \frac{20}{27} < 1.
\]

\textbf{Step 3.} Derive a contradiction and conclusion.

For every $i\geq2$, by \eqref{(2.1)}, \eqref{H25-0525-2}, and \eqref{H25-0520-11}, we have
\[
\begin{aligned}
	k_\Omega(x, x_{0,i})
	&\ge \log\left(1 + \frac{\ell(\gamma_i[x, x_{0,i}])}{\min\{d_\Omega(x), d_\Omega(x_{0,i})\}}\right) \\
	&> \log\left(1 + \frac{(e^{cC_0} - 1)d(x,y)}{\min\{d_\Omega(x), d_\Omega(x_{0,i})\}}\right) \\
	&\ge \log\left(1 + \frac{e^{cC_0} - 1}{2}\right) > 1.
\end{aligned}
\]
Thus by Lemma~\ref{Lemma-2.5},
\[
\ell(\gamma_t[x, x_{0,t}]) \le C_0\, \ell(\gamma_s[x, x_{0,s}]).
\]
It follows from the above estimate and \eqref{H25-0520-7} that
\[
\begin{aligned}
	\ell(\gamma_t)
	= \ell(\gamma_t[x, x_{0,t}]) + \ell(\gamma_t[x_t, x_{0,t}])
	= \ell(\gamma_t[x, x_{0,t}]) + \ell(\gamma_s[x_s, x_{0,s}])\leq C_0\, \ell(\gamma_s).
\end{aligned}
\]
On the other hand, since \(s < t-1\), \eqref{H25-0520-5} and \eqref{H25-0520-6} imply
\[
\ell(\gamma_t) > e^{cC_0(t-1)} d(x,y) \ge e^{cC_0(s+1)} d(x,y) \ge e^{cC_0} \ell(\gamma_s).
\]
The above two estimates clearly contradict with each other. Consequently, it holds
\[
\ell(\gamma_{xy}) \le e^{2c\kappa C_0}d(x,y).
\]
This, combining with \eqref{H25-0525-0}, show the Theorem~\ref{thm:main result varphi uniform} holds.
\end{proof}

\medskip

\textbf{Acknowledgment.}  We would like to thank our supervisor Professor Chang-Yu Guo for helpful discussions and continuous guidance throughout this work.
%We would like to thank Prof.~Pekka Koskela,  Dr.~Abhishek Pandey, Yi Xuan, Zhiqiang Yang and Linhong Wang for numerous helpful discussions and comments. 
%We also thank Dr.~Abhishek Pandey and many other colleagues for their valuable comments and suggestions.
%We also thank Dr.~Abhishek Pandey for his kind comments about Theorem \ref{Thm-A} and Theorem \ref{thm:Zhou problem}, and many other colleagues for their valuable comments and suggestions.

\end{document}